\newcommand{\real}{\mathbb{R}}
\newcommand{\nat}{\mathbb{N}}
\newcommand{\rn}{\real^N}
\newcommand{\intrn}{\int_{\real^N}}
\newcommand{\R}{\mathbb{R}}
\newcommand{\eps}{\varepsilon}
\newcommand{\de}{\delta}
\newcommand{\ffi}{\varphi}
\newcommand{\lam}{\lambda}
\newcommand{\al}{\alpha}
\newcommand{\wto}{\rightharpoonup}
\newcommand{\la}{\langle}
\newcommand{\ra}{\rangle}
\newcommand{\wt}{\widetilde}
\newcommand{\diff}{\,\mathrm{d}}
\newcommand{\dif}{\mathrm{d}}
\newcommand{\Ve}{\Vert}
\newcommand{\ve}{\vert}
\renewcommand\emptyset{\mbox{\Large \o}}
\newcommand{\sm}{\setminus}
\newcommand{\leqs}{\leqslant}
\newcommand{\geqs}{\geqslant}
\let\le=\leqslant
\let\ge= \geqslant
\newcommand{\calF}{\mathcal{F}}
\newcommand{\disp}{\displaystyle}
\DeclareMathOperator \di{div}
\DeclareMathOperator \dist{dist}
\DeclareMathOperator* \essinf{ess\,inf}
\DeclareMathOperator* \esssup{ess\,sup}
\newtheorem{theorem}{Theorem}
\newtheorem{lemma}[theorem]{Lemma}
\newtheorem{proposition}[theorem]{Proposition}
\newtheorem{corollary}[theorem]{Corollary}
\newtheorem{remark}[theorem]{Remark}
\begin{document}

\title{Existence of nodal solutions for quasilinear elliptic problems in $\rn$}

\author{Ann Derlet}

\address{Haute Ecole Robert Schuman\\
Rue de Mageroux, 27\\
B-6760 Virton, Belgique}

\email{aderlet@hers.be}

\author{Fran\c cois Genoud}

\address{Faculty of Mathematics\\
University of Vienna\\
Oskar-Morgenstern-Platz 1\\
1090 Vienna, Austria}

\email{francois.genoud@univie.ac.at}

\subjclass[2000]{Primary 35J20; Secondary 35J92}

\thanks{The research work reported in this paper was initiated while A.D. was
at the CeReMath, Universit\'e Toulouse 1 Capitole, and F.G. was at
Heriot--Watt University. A.D. visited Heriot--Watt once and
F.G. visited the CeReMath twice during this work. We are grateful to both 
institutions for their hospitality. 
We also thank the anonymous referee for helpful comments,
Charles Stuart for insightful discussions, and
Peter Tak\'a\v c for indicating useful references to us.
F.G. acknowledges the support of the U.K.
Engineering and Physical Sciences Research Council [EP/H030514/1],
and of the ERC Advanced Grant ``Nonlinear studies of water
flows with vorticity''.}

\keywords{Quasilinear elliptic equations; unbounded domain; sign-changing solutions;
Nehari manifold; weighted Sobolev spaces}

\begin{abstract}
We prove the existence of one positive, one negative, and one sign-changing
solution of a $p$-Laplacian equation on $\rn$, with a $p$-superlinear subcritical term.
Sign-changing solutions of quasilinear elliptic equations set on the whole of $\rn$ 
have only been scarcely investigated in the literature. 
Our assumptions here are similar to those previously used by some authors 
in bounded domains, and
our proof uses fairly elementary critical point theory,
based on constraint minimization on the nodal Nehari set. 
The lack of compactness due to the unbounded domain is overcome by working
in a suitable weighted Sobolev space.
\end{abstract}

\maketitle

\section{Introduction}

In this paper we study sign-changing solutions of the quasilinear elliptic equation
\begin{equation}\label{problem}
-\Delta_p u = \lambda A(x) |u|^{p-2}u + g(x,u) \quad\text{on} \ \R^N,
\end{equation}
where $\Delta_p u=\di(|\nabla u|^{p-2}\nabla u)$ with $1<p<N$, $N\ge1$, 
and $\lam$ is a real parameter. We suppose that the coefficient $A:\rn\to\real$ satisfies:

\medskip
\noindent
$\boldsymbol{(A1)}$ $A$ is measurable, with $A>0$ a.e.,
$A \in L^{\infty}(\R^N)$ and $A^{-\frac{1}{p-1}}\in L^1_\mathrm{loc}(\rn)$. 

\medskip
\noindent
Note that the last condition in $(A1)$ holds if, for instance, 
$\essinf_{\Omega}A>0$ for any bounded open set $\Omega\subset\rn$.
An additional condition relating $A$ and $\lam$, formulated in 
Section~\ref{tuning.sec} below, will also play a crucial role in our analysis.

\noindent
We suppose that the function $g$ satisfies the following assumptions, 
where $p^*=\frac{Np}{N-p}$:

\medskip
\noindent
$\boldsymbol{(g1)}$ $g:\R^N\times\R\to\R$ is a Carath\'eodory function, 
$g(x,\cdot) \in C^1(\real)$ for a.e. $x \in \real$, and there exist $q\in(p,p^*)$ 
and $B\in L^{\frac{p^*}{p^*-q}}(\R^N) \cap L^\infty(\R^N)$, $B>0$ a.e., such that
\[
|g_s(x,s)| \le B(x) |s|^{q-2}, \quad\text{for a.e.} \ x\in\R^N,\ s\in\R.
\]

\medskip 
\noindent
$\boldsymbol{(g2)}$ There exist $\theta>p$ and $R>0$ such that
\[
g(x,s)s\geqs \theta\,G(x,s)>0, \quad \text{for a.e.} \ x\in\R^N, \ |s|\geqs R,
\]
where $G(x,s)=\int_0^s g(x,t)\diff t$.

\smallskip
\noindent
$\boldsymbol{(g3)}$ The mapping $s\mapsto\disp\frac{g(x,s)}{|s|^{p-1}}$ is strictly increasing in 
$s\in\R\setminus\{0\}$, for a.e. $x\in\R^N$.

\medskip
Assumptions $(g1)$ and $(g2)$ are often referred to by saying that $g$ is {\em subcritical} and 
{\em $p$-superlinear}\footnote{The term `$(p-1)$-superhomogeneous' is sometimes used instead
of $p$-superlinear.}, respectively.
The hypothesis $B\in L^\infty(\R^N)$ will be convenient to establish compactness of a weighted
Sobolev embedding which plays a important role in the paper. However, it can be relaxed to
a local integrability assumption, namely $B\in L^{s}_\mathrm{loc}(\R^N)$ for some
large enough $s$ (see Remark~\ref{localintegr.rem} in
Appendix~\ref{compact.sec} for a more a precise statement).
Our assumptions on $B$ imply that \eqref{problem} is a compact perturbation
(in a sense that will be made precise in Lemma~\ref{compcont.lem})
of the $p$-linear eigenvalue problem
\begin{equation}\label{eigenproblem}
-\Delta_p u = \lambda A(x) |u|^{p-2}u \quad\text{on} \ \R^N.
\end{equation}
The existence of eigenvalues and 
eigenfunctions for this problem has been discussed by Allegretto and Huang \cite{AlHu}, 
even in the case of an indefinite weight $A$ (i.e. when $A$ does not have a constant sign), 
as long as $A$ is positive on a set of positive measure and satisfies an integrability condition
at infinity. Bifurcation of solutions of \eqref{problem} from the principal eigenvalue of
\eqref{eigenproblem} has been studied by Dr\'abek and Huang \cite{DraHu}, who obtained
global continua of positive and negative solutions of \eqref{problem}.
Bifurcation from higher eigenvalues 
--- which would provide sign-changing solutions --- 
is a difficult problem. In fact, sign-changing solutions for quasilinear equations in the whole of
$\rn$ have only been scarcely investigated; see however \cite{chen}, where a problem with 
cylindrical symmetry is considered, and \cite{guo}, dealing with a $p$-asymptotically
linear problem. 

On the other hand, there is a fair amount of literature
on solutions of quasilinear elliptic equations in bounded domains.
In the radial case, the Dirichlet problem in a ball has been solved by 
Del Pino and Man\'asevich \cite{dm} by the bifurcation approach, yielding infinitely many
nodal\footnote{Throughout the paper we will use the terms `nodal' and 
`sign-changing' interchangeably.} solutions. In the non-radial setting, an important contribution
(probably the most general so far) is due to Bartsch et al.~\cite{BaLiuWe}, 
who proved existence and multiplicity of nodal solutions
for $p$-Laplacian Dirichlet problems in smooth bounded
domains. Their results are obtained by critical point theory in Banach spaces, 
making clever use of a suitable pseudo-gradient flow. In fact in $\rn$, using similar arguments,
a sign-changing solution of an auxiliary $p$-superlinear problem (denoted $(P)_\mu$)
is obtained in the course of the proof in \cite{guo}. However, the equation $(P)_\mu$
considered there has a different structure from ours, mainly due to the $p$-asymptotically
linear nature of the original problem.

It is worth remarking that the hypotheses $(\text{H}_0)$--$(\text{H}_3)$ 
used to prove the existence of a nodal solution in Bartsch et al.~\cite{BaLiuWe}
are structurally similar to ours. However
in the present work we use a more elementary method, 
based on constraint minimization
on the `nodal Nehari set'. This approach
originated in a series of works on the Dirichlet problem for semilinear equations,
a review of which can be found in \cite{BaWaWi}. It has also been used more recently  
in the context of a prescribed mean-curvature problem in Bonheure et al.~\cite{anndenis},
from which some of our arguments are inspired.
Note that the Nehari approach stronlgy relies on the monotonicity assumption $(g3)$,
which is not needed in \cite{BaLiuWe}.
We will focus here on the case of a positive coefficient $A$ and,
under the hypotheses $(A1)$, $(g1)$--$(g3)$,
we will give a sufficient condition relating $\lam$ and $A$ 
for the existence of at least one positive, one negative, 
and one nodal solution of \eqref{problem}. This condition  --- assumption $(A,\lam)$ 
below --- should be compared with hypothesis $(\text{H}_3)$ in \cite{BaLiuWe},
involving the first eigenvalue of the $p$-linear problem.
We will also deduce corresponding existence results for the problem
\begin{equation}\label{problem2}
-\Delta_p u = \wt A(x) |u|^{p-2}u + g(x,u) \quad\text{on} \ \R^N,
\end{equation}
under appropriate conditions on the coefficient $\wt A \in L^{\infty}(\R^N)$. 

Our approach is based on a variational
formulation of \eqref{problem} in a weighted Sobolev space. 
More precisely, let us define the norm
$\Ve\cdot\Ve_{W_A}$ on $C_0^\infty(\rn)$ by
\begin{equation}\label{normA}
\Ve u\Ve_{W_A}=\Big(\intrn |\nabla u|^p + A(x) |u|^p \diff x\Big)^{1/p}.
\end{equation}
We will work in the space $W_A(\rn)=\overline{C_0^\infty(\rn)\,}^{\Ve\cdot\Ve_{W_A}}$.
Some elementary properties of $W_A(\rn)$ will be established in Section~\ref{prelim.sec}. 
In particular, $W_A(\rn)$ is a separable and uniformly convex (hence reflexive) Banach space. 
Since $A\in L^{\infty}(\R^N)$, $W_A(\rn) \supseteq W^{1,p}(\rn)$, 
with equality if $A$ is bounded away from zero. 
We will often merely write $W_A$ for $W_A(\rn)$, although some properties
of $W_A(\Omega)$ will be given in Section~\ref{prelim.sec}, for more 
general open subsets $\Omega\subset\rn$.

With $G$ defined as in $(g2)$, we introduce the functional $S_\lam : W_A \to \real$,
\begin{equation}\label{functional}
S_\lam(u)=\frac1p\intrn |\nabla u|^p -\lam A(x) |u|^p \diff x - \intrn G(x,u) \diff x.
\end{equation}
Due to $(g2)$ $S_\lam$ is not coercive, so one cannot apply 
the direct method of the calculus of variations to find critical points of $S_\lam$. 
We will see that constraint minimization on Nehari-type sets provides an efficient alternative
to obtain critical points, and to discuss their nodal properties.


\subsection{Main results}\label{tuning.sec}
We now formulate the assumption
relating $A$ and $\lam$ that we shall use to prove our results. 
For $A$ satisfying $(A1)$, let
\begin{equation}\label{lambdaA}
\lam_A=\inf_{u\in W_A\sm\{0\}}\frac{\intrn |\nabla u|^p \diff x}{\intrn A(x)|u|^p \diff x}\ge 0.
\end{equation}
Provided $A$ and $\lam$ satisfy

\medskip
\noindent
$\boldsymbol{(A,\lam)}$ $\lam<\lam_A$,

\medskip
\noindent
it will be shown in Section~\ref{prelim.sec} that
\begin{equation}\label{normlambda}
\Ve u\Ve_{\lam}=\Big(\intrn |\nabla u|^p -\lam A(x) |u|^p \diff x\Big)^{1/p}
\end{equation}
defines a (quasi)norm\footnote{see Lemma~\ref{normequ.lem} and Remark~\ref{normequ.rem}} 
on $W_A$, which is equivalent to $\Ve\cdot\Ve_{W_A}$.
This enables one to rewrite $S_\lam : W_A \to \real$ as
\begin{equation}\label{refunctional}
S_\lam(u)=\frac1p\Ve u\Ve_{\lam}^p - \intrn G(x,u) \diff x,
\end{equation}
which is a convenient way to exhibit the main properties of the $p$-homogeneous
part of $S_\lam$ with respect to the variational procedure. Note that if
$\lam<0$ then $(A,\lam)$ is always satisfied, and $\Ve\cdot\Ve_{\lam}$ is a norm, which 
is equivalent to $\Ve\cdot\Ve_{W_A}$ on $W_A$. The more interesting case of positive
$\lam$ requires $\lam_A>0$. As will be shown in Section~\ref{existence.sec},
a sufficient condition for this to hold is

\medskip
\noindent
$\boldsymbol{(A2)}$ $A\in L^{N/p}(\rn)$.

\medskip
\noindent

We shall see in Lemma~\ref{derivative.lem} that $S_\lam\in C^1(W_A,\real)$ provided
$A$ satisfies $(A_1)$. Our main results concern weak solutions of problems \eqref{problem}
and \eqref{problem2}. A function $u\in W_A$ is called a {\em solution} 
of \eqref{problem} if and only if $S_\lam'(u)=0$, that is, if and only if
\begin{equation*}\label{weak}
\intrn |\nabla u|^{p-2}\nabla u\cdot\nabla\ffi \diff x
-\lam\intrn A(x)|u|^{p-2}u\ffi \diff x -\intrn g(x,u)v \diff x=0 \ \forall\,\ffi\in W_A,
\end{equation*}
with a similar definition for solutions of \eqref{problem2}.
The existence of weak solutions will be proved by a variational approach in 
Section~\ref{existence.sec}, under hypotheses $(A1)$,
$(A,\lam)$ and $(g1)$--$(g3)$. In addition to ensuring that $\lam_A>0$, assumption
$(A2)$ is also needed to obtain extra regularity of the solutions.
Our main result is the following.

\begin{theorem}\label{main.thm}
Suppose that the hypotheses $(A1)$, $(A2)$, 
$(A,\lam)$ and $(g1)$--\,$(g3)$ are satisfied. Then there 
exist three solutions $u_1,u_2,u_3\in C^{1,\alpha}_\mathrm{loc}(\rn)$ of \eqref{problem}, 
with $u_1>0$, $u_2<0$, and $u_3^\pm\not\equiv 0$. 
Furthermore, $u_3$ has exactly two nodal domains. 
\end{theorem}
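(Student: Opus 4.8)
The plan is to obtain $u_1,u_2,u_3$ as minimizers of $S_\lam$ over three different Nehari-type constraint sets, and then to prove the regularity and the two-nodal-domain statement afterwards. Work in $W_A$ with the equivalent norm $\Ve\cdot\Ve_\lam$, so that $S_\lam(u)=\frac1p\Ve u\Ve_\lam^p-\intrn G(x,u)\diff x$. For a fixed $u\in W_A\sm\{0\}$ write $\psi_u(t)=S_\lam(tu)$ for $t>0$. Using $(g1)$ and the equivalence of norms, $\psi_u(t)>0$ for small $t$; using $(g2)$ (which forces $G(x,s)\gtrsim|s|^\theta$ for $|s|\ge R$, $\theta>p$) one gets $\psi_u(t)\to-\infty$ as $t\to\infty$; and $(g3)$ makes $t\mapsto\intrn g(x,tu)tu\diff x/t^p$ strictly increasing, so $\psi_u$ has a \emph{unique} critical point $t=t(u)>0$, which is its global maximum. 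This defines the Nehari manifold $\mathcal N_\lam=\{u\neq0:\la S_\lam'(u),u\ra=0\}$ and the value $c_1=\inf_{\mathcal N_\lam}S_\lam$. A minimizing sequence can be taken nonnegative (replace $u$ by $|u|$, which does not increase $\Ve\cdot\Ve_\lam$ by the usual $|\nabla|u||\le|\nabla u|$ argument and leaves $\intrn G$ unchanged by evenness coming from the growth bound — more precisely one works with the even extension, or simply notes $G(x,\cdot)$ need not be even and instead minimizes over the cones $\mathcal N_\lam^+=\{u\in\mathcal N_\lam:u\ge0\}$ and $\mathcal N_\lam^-$ separately). The key compactness input is Lemma~\ref{compcont.lem} (the embedding $W_A\hookrightarrow L^q(\rn)$ is compact, so $u\mapsto\intrn G(x,u)$ and $u\mapsto\intrn g(x,u)u$ are weakly continuous); combined with boundedness of minimizing sequences (immediate from the Nehari constraint plus $(g2)$, via the standard computation $c\le S_\lam(u)-\frac1\theta\la S_\lam'(u),u\ra$) and the $(S_+)$ property of the $p$-Laplacian on $W_A$, one extracts a strongly convergent subsequence whose limit $u_1\ge0$ lies on $\mathcal N_\lam^+$ and attains $c_1>0$. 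Standard Lagrange-multiplier / deformation arguments on the Nehari manifold give $S_\lam'(u_1)=0$. By the strong maximum principle for the $p$-Laplacian (applicable once interior regularity is known) $u_1>0$; symmetrically one gets $u_2<0$.

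For the sign-changing solution, introduce the nodal Nehari set
\[
\mathcal M_\lam=\{u\in W_A: u^\pm\neq0,\ \la S_\lam'(u),u^+\ra=0,\ \la S_\lam'(u),u^-\ra=0\},
\]
and set $c_3=\inf_{\mathcal M_\lam}S_\lam$. The crucial structural fact, which comes again from $(g1)$–$(g3)$, is that for any $u$ with $u^\pm\neq0$ there is a \emph{unique} pair $(s,t)\in\oi^2$ with $su^++tu^-\in\mathcal M_\lam$, and that this pair realizes the maximum of $(s,t)\mapsto S_\lam(su^++tu^-)$; this is the two-dimensional analogue of the fibering analysis above and uses that $\Ve su^++tu^-\Ve_\lam^p=s^p\Ve u^+\Ve_\lam^p+t^p\Ve u^-\Ve_\lam^p$ since $u^+$ and $u^-$ have disjoint supports. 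In particular $c_3\ge c_1+c_2$ (wait — one has to be a little careful: $c_3\ge$ something, and the reverse inequality coming from a test function built from $u_1$ and $u_2$ is not available because their supports overlap; the correct statement is simply $c_3>\max\{c_1,c_2\}$, which suffices). Then take a minimizing sequence $u_n\in\mathcal M_\lam$; it is bounded as before, so $u_n\wto u_3$ along a subsequence, and by compactness of the embedding $u_n^\pm\to u_3^\pm$ strongly in $L^q$, while also $u_n^\pm\wto u_3^\pm$ in $W_A$. One needs $u_3^\pm\neq0$: this is where the uniform estimate is used, namely there is $\rho>0$ with $\Ve u^\pm\Ve_\lam\ge\rho$ for every $u\in\mathcal M_\lam$ (from $\la S_\lam'(u),u^\pm\ra=0$, $(g1)$ and Sobolev), and strong $L^q$-convergence of $u_n^\pm$ together with the Nehari relation passes this lower bound to the limit. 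Hence $u_3^\pm\neq0$. Using the $(S_+)$ property again on each half, $u_n^\pm\to u_3^\pm$ strongly in $W_A$, so $u_3\in\mathcal M_\lam$ and $S_\lam(u_3)=c_3$.

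That $u_3$ is an actual critical point of $S_\lam$ (not merely a constrained one) is the main obstacle, as usual in this circle of ideas. I would argue by contradiction: if $S_\lam'(u_3)\neq0$ there is a small ball around $u_3$ and a pseudo-gradient-type deformation $\eta$ decreasing $S_\lam$ there; applying $\eta$ to $su_3^++tu_3^-$ for $(s,t)$ near $(1,1)$ and using the uniqueness and \emph{maximality} of the pair $(1,1)$ in the two-parameter fibering (a degree-theoretic / Miranda-type argument on the map $(s,t)\mapsto(\la S_\lam'(su_3^++tu_3^-),u_3^+\ra,\la\cdot,u_3^-\ra)$ guarantees the deformed surface still meets $\mathcal M_\lam$), one produces an element of $\mathcal M_\lam$ with energy below $c_3$ — contradiction. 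Hence $S_\lam'(u_3)=0$, i.e. $u_3$ solves \eqref{problem}, and $u_3^\pm\not\equiv0$ by construction. Finally, interior regularity $u_i\in C^{1,\al}_\mathrm{loc}(\rn)$ follows from $(A2)$ and $(g1)$ by the standard $L^\infty_\mathrm{loc}$ bootstrap (Moser/De Giorgi for $-\Delta_p$, the $L^{N/p}$ hypothesis on $A$ and the subcritical growth of $g$ giving the right-hand side in a suitable $L^s_\mathrm{loc}$) followed by the DiBenedetto–Tolksdorf $C^{1,\al}$ estimate. For the last assertion, suppose $u_3$ had three or more nodal domains; pick three of them, say on $\Omega_1,\Omega_2$ with $u_3>0$ on $\Omega_1$ and $u_3<0$ on $\Omega_2$ and a further nodal domain $\Omega_3$. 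The restrictions $v=u_3\chi_{\Omega_1\cup\Omega_3'}$-type combinations lie in $W_A$ (each $u_3\chi_{\Omega_i}\in W_A$ because $u_3\in W^{1,p}_\mathrm{loc}$ and the nodal set is where $u_3$ vanishes, so the truncation introduces no boundary term), and testing the equation against $u_3\chi_{\Omega_i}$ shows each such piece already satisfies the corresponding one-signed Nehari relation \emph{with equality}; assembling a two-signed competitor from exactly two of the nodal components gives an element of $\mathcal M_\lam$ whose energy is strictly smaller than $S_\lam(u_3)=c_3$ (the discarded nodal domain contributes positive energy, since on it the one-signed fibering maximum is positive), contradicting minimality. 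Therefore $u_3$ has exactly two nodal domains.
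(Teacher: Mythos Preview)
Your overall plan coincides with the paper's: minimize $S_\lambda$ over $N_\lambda^\pm$ and over the nodal Nehari set $M_\lambda$, show the minima are attained, prove the $M_\lambda$-minimizer is an unconstrained critical point via a Miranda/deformation argument, then establish $C^{1,\alpha}_{\mathrm{loc}}$ regularity and the two-nodal-domain property by the standard energy-comparison contradiction. The Miranda step, the regularity, and the nodal-domain count are handled essentially as in the paper. However, two intermediate steps are not justified as written, and in both places the paper proceeds differently.

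\emph{Boundedness of a minimizing sequence in $M_\lambda$.} You invoke the Ambrosetti--Rabinowitz identity $S_\lambda(u)-\tfrac{1}{\theta}\langle S_\lambda'(u),u\rangle$ and call boundedness ``immediate''. On $N_\lambda$ this reduces to $(\tfrac{1}{p}-\tfrac{1}{\theta})\Vert u\Vert_\lambda^p+\int(\tfrac{1}{\theta}g(x,u)u-G(x,u))\,\mathrm{d}x$. The integrand is nonnegative only on $\{|u|\ge R\}$ by $(g2)$; on $\{|u|<R\}$ the best pointwise bound from $(g1)$ is $|\tfrac{1}{\theta}gu-G|\le CB(x)R^q$, and there is no reason for $\int_{\{|u|<R\}}B$ to be finite, since $B\in L^{\frac{p^*}{p^*-q}}(\R^N)\cap L^\infty(\R^N)$ need not lie in $L^1(\R^N)$. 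Controlling this integral instead by $C\Vert u\Vert_\lambda^q$ via the Sobolev embedding is useless because $q>p$. The paper bypasses the issue: assuming $\Vert u_n\Vert_\lambda\to\infty$, it normalizes to $v_n=c\,u_n/\Vert u_n\Vert_\lambda$, uses compactness (Lemma~\ref{compcont.lem}) to identify a weak limit $v$, and then exploits the $p$-superlinearity $G(x,s)/|s|^p\to\infty$ coming from $(g2)$ to derive a contradiction whether $v\equiv 0$ or not (see Step~1 of Proposition~\ref{min.lem}).

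\emph{Passing to the limit in $M_\lambda$.} You claim that the $(S_+)$ property of $-\Delta_p$ upgrades $u_n^\pm\rightharpoonup u_3^\pm$ to strong convergence in $W_A$. But $(S_+)$ requires $\limsup\langle -\Delta_p u_n^\pm,u_n^\pm-u_3^\pm\rangle\le 0$, and you only have the constraint $\langle S_\lambda'(u_n),u_n^\pm\rangle=0$, not $S_\lambda'(u_n)\to 0$; Ekeland's principle is unavailable here since $M_\lambda$ is not a manifold. All one can conclude is $J_\lambda(u_3^\pm)\le 0$, by weak lower semicontinuity of $\Vert\cdot\Vert_\lambda^p$ combined with the compactness of the nonlinear terms. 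The paper does not attempt strong convergence. Instead, if one of these inequalities is strict, Lemma~\ref{proj.lem} produces projection factors $t^\pm\le 1$ with at least one strictly less than $1$; then $t^+u_3^++t^-u_3^-\in M_\lambda$, and using $S_\lambda|_{N_\lambda}=\int h$ together with the strict monotonicity of $h(x,s)=\tfrac{1}{p}g(x,s)s-G(x,s)$ in $|s|$ (a consequence of $(g3)$, see Lemma~\ref{consequences}(iii)), one obtains $S_\lambda(t^+u_3^++t^-u_3^-)<m_\lambda$, a contradiction. This yields $u_3\in M_\lambda$ and $S_\lambda(u_3)=m_\lambda$ directly, without any appeal to $(S_+)$.
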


The definition of the class $C^{1,\alpha}_\mathrm{loc}(\rn)$ is recalled in Section~\ref{reg.sec}.
As usual, we denote by $u^\pm$ the positive and negative parts of $u$. More precisely, we use 
here the convention $u^\pm(x):=\pm\max\{\pm u(x),0\}$, so that $u=u^++u^-$.
For a continuous function $u:\rn\to\real$, a {\em nodal domain} is a 
connected component of $\rn\setminus u^{-1}(\{0\})$.

It is also possible to formulate a version of our results from which the parameter $\lam$ is 
absent, pertaining to problem \eqref{problem2}. 

\begin{corollary}\label{main.cor}
Suppose that $(g1)$--\,$(g3)$ are satisfied.
If either
\begin{itemize}
\item[(i)] $\wt A$ satisfies $(A1)$, $(A2)$ and $\lam_{\wt A}>1$,
\item[(ii)] or $-\wt A$ satisfies $(A1)$ and $(A2)$,
\item[(iii)] or $\wt A=0$ a.e.,
\end{itemize}
then \eqref{problem2} has three solutions in $C^{1,\alpha}_\mathrm{loc}(\rn)$.
Furthermore, the solutions have the same nodal structure as in Theorem~\ref{main.thm},
i.e. one is positive, one is negative, and one is sign-changing with two nodal domains.
\end{corollary}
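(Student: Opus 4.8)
The plan is to deduce Corollary~\ref{main.cor} directly from Theorem~\ref{main.thm} by choosing an appropriate pair $(A,\lam)$ in each of the three cases, so that \eqref{problem} reduces to \eqref{problem2}. The key observation is that \eqref{problem2} is exactly \eqref{problem} with the coefficient $\lam A(x)$ replaced by $\wt A(x)$; thus any factorisation $\wt A = \lam A$ with $A$ satisfying the hypotheses of the theorem and $\lam < \lam_A$ will do. Since $g$ is unchanged, the hypotheses $(g1)$--$(g3)$ carry over verbatim, and the regularity and nodal conclusions are those of Theorem~\ref{main.thm}.

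In case (i), I would simply take $A = \wt A$ and $\lam = 1$. Then $(A1)$ and $(A2)$ hold by assumption, and the hypothesis $\lam_{\wt A} > 1$ is precisely $(A,\lam)$ with $\lam = 1$. In case (ii), set $A = -\wt A$ (so $A > 0$ a.e.\ and $A$ satisfies $(A1)$, $(A2)$) and $\lam = -1 < 0$; as noted in the text right after \eqref{normlambda}, $(A,\lam)$ is automatically satisfied when $\lam < 0$, since $\lam_A \ge 0$. Then $\lam A = -A = \wt A$, so \eqref{problem} becomes \eqref{problem2}. Case (iii) with $\wt A = 0$ a.e.\ is slightly more delicate because the theorem requires $A > 0$ a.e.; here I would pick any fixed $A$ satisfying $(A1)$ and $(A2)$ with $\lam_A > 0$ — for instance $A(x) = (1+|x|^2)^{-N/p}$, which lies in $L^\infty \cap L^{N/p}(\rn)$ and is bounded away from zero on every ball, so that $(A1)$, $(A2)$ hold and, by the discussion preceding $(A2)$, $\lam_A > 0$ — and then take $\lam = 0 < \lam_A$, so that $(A,\lam)$ holds and $\lam A = 0 = \wt A$ a.e. In all three cases Theorem~\ref{main.thm} applies to the corresponding pair $(A,\lam)$ and yields three solutions $u_1 > 0$, $u_2 < 0$, $u_3$ sign-changing with exactly two nodal domains, in $C^{1,\alpha}_\mathrm{loc}(\rn)$, which are solutions of \eqref{problem2}.

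The main (mild) obstacle is bookkeeping: one must check that the function space $W_A$ and the solution concept for \eqref{problem2} agree with those attached to the chosen factorisation. In cases (i) and (ii) this is immediate since $A$ is, up to sign, equal to $\wt A$ and $W_{\wt A}$ is the natural space for \eqref{problem2}. In case (iii) the chosen auxiliary weight $A$ is bounded away from zero on bounded sets but not globally, so $W_A \supseteq W^{1,p}(\rn)$ may be strict; nevertheless the weak formulation displayed before Theorem~\ref{main.thm}, with test functions ranging over $W_A$, is a perfectly good notion of solution of \eqref{problem2} (which contains no $A$-term at all), and the resulting $u_i$ lie in $W_A$ and solve \eqref{problem2} in that sense, with the stated local regularity. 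Hence the corollary follows.
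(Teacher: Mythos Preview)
Your proof is correct and follows exactly the same approach as the paper: in each case you factorise $\wt A=\lam A$ with $(A,\lam)=(\wt A,1)$, $(-\wt A,-1)$, and $(\text{any }A,0)$ respectively, then invoke Theorem~\ref{main.thm}. One small caveat: your concrete choice $A(x)=(1+|x|^2)^{-N/p}$ in case~(iii) lies in $L^{N/p}(\rn)$ only when $p^2<2N$, so for general $1<p<N$ you should pick a faster-decaying weight (e.g.\ $A(x)=e^{-|x|^2}$); this does not affect the argument, and the paper simply says ``any $A$ satisfying $(A1)$--$(A2)$''.
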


\begin{proof}
In case (i) holds, the result follows from Theorem~\ref{main.thm} by 
letting $A=\wt A$ and $\lam=1<\lam_{\wt A}$.
Case (ii) follows by letting $A=-\wt A$ and $\lam=-1$, while for (iii) one can
take any $A$ satisfying $(A1)$--$(A2)$, and $\lam=0$.
\end{proof}

The rest of the paper is organized as follows. In Section~\ref{prelim.sec} 
we first establish some properties of the spaces $W_A(\Omega)$,
in particular a compact embedding that plays a central role in the proof of our main result.
The existence of three critical points is proved in Section~\ref{existence.sec} by minimization
on Nehari-type sets. The regularity of solutions is discussed in Section~\ref{reg.sec}, where the
proof of Theorem~\ref{main.thm} is then completed.

We shall use the letter $C$ (possibly with an index) to denote various positive constants,
the exact value of which is not relevant to the analysis.


\section{Preliminaries}\label{prelim.sec}

We start this section by some preliminary results about the functional setting, 
which will be useful to our existence theory. In particular we establish embedding properties 
of the space $W_A(\Omega)$ which will play an important role in our analysis. For an
arbitrary open set $\Omega\subset\rn$, let
\[
W_A(\Omega)=\{u\ve_\Omega : u\in W_A(\rn)\},
\]
where $W_A(\rn)$ has been defined in the introduction.
Given a positive measurable function $B:\Omega\to\real_+$ and $q\ge 1$,
we define the weighted Lebesgue space 
\[
L^q_B(\Omega)=\{u:\Omega\to\real : u \ \text{is measurable and} \ 
\int_\Omega |u|^q B(x) \diff x<\infty\},
\]
endowed with its natural norm 
$\Ve u\Ve_{L^q_B(\Omega)}=(\int_\Omega |u|^q B(x) \diff x)^{1/q}$. 
When there is no risk of confusion, we shall merely write $W_A$ and $L^q_B$ for 
$W_A(\Omega)$ and $L^q_B(\Omega)$. 
We may also use the shorthand notation $\Ve\cdot\Ve_r\equiv \Ve\cdot\Ve_{L^r}$
for the usual Lebesgue norms.

Under appropriate assumptions, we will show that $W_A(\Omega)$ is continuously and
compactly embedded into $L^q_B(\Omega)$. 
Our proof will rely on the classic theory of Sobolev spaces as 
presented for instance in Brezis \cite[Chapitre~IX]{Br}.\footnote{Note that an 
English translation of
Brezis' book is also available \cite{BrE}. However, in the original (French) version 
\cite{Br}, the compactness result we shall use is formulated in a way which is slightly 
better suited to our proof of Proposition~\ref{sobolev.prop}, 
so we will rather refer to \cite{Br} throughout.}
But let us first state the following elementary properties.

\begin{proposition}\label{bspace.prop}
Let $A$ satisfy $(A1)$ and $B:\Omega\to\real$ be measurable with $B>0$ a.e..
\begin{itemize}
\item[(i)] $W_A(\Omega)$ is a separable, uniformly convex --- hence reflexive --- Banach space 
satisfying $W^{1,p}(\Omega) \subseteq W_A(\Omega)$, with equality if $\essinf_{\rn}A>0$.
\item[(ii)] For any $1<q<\infty$, $L^q_B(\Omega)$ is a separable reflexive Banach space.
\end{itemize}
\end{proposition}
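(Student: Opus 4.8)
The plan is to prove each assertion separately, reducing everything to standard facts about Sobolev and Lebesgue spaces. I begin with part (i). The key observation is that $\Vert\cdot\Vert_{W_A}$ is literally the norm inherited from the product Banach space $L^p(\Omega;\real^N)\times L^p_A(\Omega)$ via the linear map $u\mapsto(\nabla u,u)$; indeed, since $A\in L^\infty(\rn)$ and $A>0$ a.e., the quantity $\big(\intrn A|u|^p\big)^{1/p}$ is a genuine norm on $L^p$-functions equivalent, on any set where $A$ is bounded below, to the usual $L^p$ norm, and in general $\Vert u\Vert_{L^p_A}\le \Vert A\Vert_\infty^{1/p}\Vert u\Vert_{L^p}$. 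So $C_0^\infty(\rn)$ embeds isometrically (for $\Vert\cdot\Vert_{W_A}$) into the product space $E:=L^p(\rn;\real^N)\times L^p_A(\rn)$, and $W_A(\rn)$ is by definition the closure of its image. A closed subspace of a separable (resp.\ uniformly convex) Banach space is again separable (resp.\ uniformly convex), and $L^p$ with $1<p<\infty$ is uniformly convex by Clarkson's inequalities, hence so is the finite product $E$ with, say, the $\ell^p$-combination of the factor norms; separability of $E$ is clear. This gives that $W_A(\rn)$ — and then $W_A(\Omega)$, being the image of $W_A(\rn)$ under restriction, which I must check is again a Banach space with an equivalent norm — is separable and uniformly convex, and uniform convexity gives reflexivity by Milman--Pettis. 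For the restriction issue: $W_A(\Omega)=\{u|_\Omega:u\in W_A(\rn)\}$ should be equipped with the quotient norm $\inf\{\Vert v\Vert_{W_A(\rn)}:v|_\Omega=u\}$, a quotient of a separable uniformly convex space by the closed subspace of functions vanishing a.e.\ on $\Omega$, hence again separable, uniformly convex and reflexive.

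For the inclusion $W^{1,p}(\Omega)\subseteq W_A(\Omega)$ with equality when $\essinf_{\rn}A>0$: on $C_0^\infty(\rn)$ one has $\Vert u\Vert_{W_A}^p=\Vert\nabla u\Vert_p^p+\Vert u\Vert_{L^p_A}^p\le \Vert\nabla u\Vert_p^p+\Vert A\Vert_\infty\Vert u\Vert_p^p\le \max\{1,\Vert A\Vert_\infty\}\Vert u\Vert_{W^{1,p}}^p$, so the identity on $C_0^\infty$ extends to a continuous injection $W^{1,p}(\rn)\hookrightarrow W_A(\rn)$ (injectivity because convergence in $\Vert\cdot\Vert_{W_A}$ forces convergence in $L^p$ via the $A$-term once $A$ is bounded below on a fixed ball, or more simply by identifying elements as distributions); restricting to $\Omega$ gives the claim. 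If $a_0:=\essinf_{\rn}A>0$ then conversely $\Vert u\Vert_{W^{1,p}}^p\le \max\{1,a_0^{-1}\}\Vert u\Vert_{W_A}^p$ on $C_0^\infty$, so the two norms are equivalent there and the completions coincide, yielding $W_A(\rn)=W^{1,p}(\rn)$ and hence $W_A(\Omega)=W^{1,p}(\Omega)$ (with the caveat that $W^{1,p}(\Omega)$ here means the restrictions to $\Omega$ of $W^{1,p}(\rn)$-functions, consistently with the definition of $W_A(\Omega)$).

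Part (ii) is the routine one. Fix $1<q<\infty$. Since $B>0$ a.e., the map $u\mapsto B^{1/q}u$ is a linear bijection from $L^q_B(\Omega)$ onto $L^q(\Omega)$ which is isometric by the very definition of the norms, so $L^q_B(\Omega)$ inherits completeness, separability, reflexivity and uniform convexity directly from $L^q(\Omega)$, the last three being classical for $1<q<\infty$. (One only needs that $B$ and $B^{-1}$ are finite a.e., which holds since $B>0$ a.e.; measurability of $B^{1/q}$ is automatic.) This completes the proof.

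I do not expect a genuine obstacle here — every step is soft functional analysis — but the one point requiring slight care is the precise meaning of $W_A(\Omega)$ and $W^{1,p}(\Omega)$ as spaces of \emph{restrictions}, and correspondingly identifying $W_A(\Omega)$ with an honest quotient Banach space so that separability, uniform convexity and reflexivity transfer cleanly; getting the norm on $W_A(\Omega)$ right (quotient norm) is what makes the argument go through without appealing to any trace or extension theorem.
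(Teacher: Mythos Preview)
Your argument is correct. The paper's own proof is essentially a citation: it notes that the equality $W_A(\rn)=W^{1,p}(\rn)$ when $\essinf_{\rn}A>0$ follows from the definition, and defers everything else to \cite{BenFor} and \cite{DraKufNic}. You instead give a self-contained proof via the standard devices --- embedding $W_A(\rn)$ isometrically as a closed subspace of the $\ell^p$-product $L^p(\rn;\rn)\times L^p_A(\rn)$ (uniformly convex by Day's theorem, separable), handling $W_A(\Omega)$ as a quotient, and treating $L^q_B$ via the isometry $u\mapsto B^{1/q}u$ onto $L^q$. All of this is sound; in particular, uniform convexity does pass to quotients by closed subspaces (with $\delta_{X/M}\ge\delta_X$), so that step is fine. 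The one point you rightly flag --- that the paper never specifies the norm on $W_A(\Omega)$, and that your quotient norm may differ from the obvious $\Omega$-integral norm unless an extension theorem is available --- is a genuine ambiguity in the paper rather than a defect in your proof; for the cases actually used later ($\Omega=\rn$ or bounded with $C^1$ boundary) the two coincide.
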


\begin{proof}
The fact that $W_A(\rn)=W^{1,p}(\rn)$ if $\essinf_{\rn}A>0$ follows from the definition of $W_A(\rn)$. 
The remaining statements can be found in \cite{BenFor} and \cite{DraKufNic}. 
\end{proof}

\begin{proposition}\label{sobolev.prop}
Suppose that $\Omega\subset\rn$ is either $\rn$ or a bounded open set with $C^1$ boundary.
Let $q\in(p,p^*)$ and $B\in L^{\frac{p^*}{p^*-q}}(\rn)\cap L^\infty(\rn)$. Then we have
$W_A(\Omega) \subset L_B^q(\Omega)$ and there is a constant $C>0$ such that
\begin{equation}\label{sobolev}
\Ve u\Ve_{L^q_B(\Omega)} \le C \Ve u \Ve_{W_A(\Omega)}, \quad u\in W_A(\Omega).
\end{equation}
Furthermore, the embedding is compact.
\end{proposition}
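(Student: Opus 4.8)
The plan is to establish the continuous embedding first, and then upgrade it to compactness by a cut-off/tail-truncation argument that isolates the behaviour near infinity. For the continuous embedding, the starting point is the Sobolev inequality in the form $W_A(\Omega)\hookrightarrow W^{1,p}(\Omega)\hookrightarrow L^{p^*}(\Omega)$; the first inclusion is immediate from $A\in L^\infty$ and the definition of $\Ve\cdot\Ve_{W_A}$ (actually $W_A\supseteq W^{1,p}$, so I should be careful: $\Ve u\Ve_{W_A}\le C\Ve u\Ve_{W^{1,p}}$ holds, but I need the other direction on $W_A$; this follows from the classical Sobolev inequality $\Ve\nabla u\Ve_p\ge c\Ve u\Ve_{p^*}$ applied to $u\in C_0^\infty$ and density, giving $\Ve u\Ve_{p^*}\le C\Ve u\Ve_{W_A}$ directly without passing through $W^{1,p}$). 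Then I would apply H\"older's inequality with exponents $\frac{p^*}{q}$ and $\frac{p^*}{p^*-q}$ to write
\[
\int_\Omega |u|^q B\diff x \le \Big(\int_\Omega |u|^{p^*}\diff x\Big)^{q/p^*}\Big(\int_\Omega B^{\frac{p^*}{p^*-q}}\diff x\Big)^{\frac{p^*-q}{p^*}},
\]
which gives $\Ve u\Ve_{L^q_B}\le C\Ve B\Ve_{p^*/(p^*-q)}^{1/q}\Ve u\Ve_{p^*}\le C'\Ve u\Ve_{W_A}$, i.e. \eqref{sobolev}.

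For compactness, I would take a bounded sequence $(u_n)$ in $W_A(\Omega)$, so $\Ve u_n\Ve_{p^*}\le C$ as well, and (passing to a subsequence) $u_n\wto u$ in $W_A$. The idea is to split $\Omega$ into a large ball $B_R=B(0,R)\cap\Omega$ and its complement $\Omega\setminus B_R$. On the tail, since $B\in L^{p^*/(p^*-q)}$, the quantity $\Ve B\Ve_{L^{p^*/(p^*-q)}(\Omega\setminus B_R)}$ tends to $0$ as $R\to\infty$; combined with the uniform bound on $\Ve u_n\Ve_{p^*}$ and the same H\"older estimate as above, the contribution $\int_{\Omega\setminus B_R}|u_n-u|^q B\diff x$ can be made smaller than any $\eps$, uniformly in $n$, by choosing $R$ large. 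On the fixed bounded piece $B_R$, I would use that $W_A(B_R)$ embeds into $W^{1,p}(B_R)$ continuously (again from $A\in L^\infty$, or directly: the gradients and $L^p$ norms are controlled on the bounded set since $A>0$ a.e.\ makes $\Ve u\Ve_{L^p(B_R)}$ comparable... actually the cleanest route is $\Ve u\Ve_{W^{1,p}(B_R)}^p=\Ve\nabla u\Ve_{L^p(B_R)}^p+\Ve u\Ve_{L^p(B_R)}^p$, and $\Ve u\Ve_{L^p(B_R)}\le \Ve u\Ve_{L^{p^*}(B_R)}|B_R|^{1/p-1/p^*}\le C\Ve u\Ve_{W_A}$, so indeed $u_n$ is bounded in $W^{1,p}(B_R)$), hence by the Rellich--Kondrachov theorem (Brezis \cite[Chapitre~IX]{Br}) $u_n\to u$ strongly in $L^q(B_R)$ along a further subsequence; since $B\in L^\infty$, this gives $\int_{B_R}|u_n-u|^q B\diff x\le\Ve B\Ve_\infty\int_{B_R}|u_n-u|^q\diff x\to 0$.

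Combining the two pieces via a standard diagonal argument — first fix $R$ large to control the tail uniformly, then extract a subsequence converging on $B_R$ — yields $\limsup_n\Ve u_n-u\Ve_{L^q_B(\Omega)}^q\le C\eps$ for every $\eps>0$, hence $u_n\to u$ strongly in $L^q_B(\Omega)$, which is the claimed compactness. The main obstacle, and the point deserving the most care, is the uniform smallness of the tail: this is where the hypothesis $B\in L^{p^*/(p^*-q)}(\rn)$ is used essentially, and it must be paired with the a priori bound $\Ve u_n\Ve_{p^*}\le C\Ve u_n\Ve_{W_A}\le C$ coming from the already-established continuous embedding. (The hypothesis $B\in L^\infty$ is convenient but, as noted in the paper, only needed to make the bounded-ball step go through cleanly; if $\Omega$ is itself bounded, no truncation is needed and the tail step is vacuous.)
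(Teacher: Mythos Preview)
Your proof is correct. The continuous embedding and the tail-control step of the compactness argument (for $\Omega=\rn$) match the paper's proof essentially line for line: H\"older with exponents $p^*/q$ and $p^*/(p^*-q)$, followed by the Sobolev inequality $\Ve u\Ve_{p^*}\le C\Ve\nabla u\Ve_p$ extended to $W_A$ by density.

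The genuine difference is in how compactness on a bounded piece is obtained. The paper does not invoke the classical Rellich--Kondrachov theorem; instead it reproves compactness of $W_A(\Omega)\hookrightarrow L^q_B(\Omega)$ for bounded $\Omega$ from scratch, by verifying the Riesz--Fr\'echet--Kolmogorov criterion in the weighted space $L^q_B$ (equicontinuity of translates via interpolation between $L^1_B$ and $L^{p^*}_B$, smallness near $\partial\Omega$ by H\"older). Your route --- bound $(u_n)$ in $W^{1,p}(B_R)$ via $\Ve u\Ve_{L^p(B_R)}\le|B_R|^{1/p-1/p^*}\Ve u\Ve_{p^*}\le C\Ve u\Ve_{W_A}$, apply the unweighted Rellich--Kondrachov to get $u_n\to u$ in $L^q(B_R)$, then multiply by $\Ve B\Ve_\infty$ --- is shorter and perfectly valid under the stated hypothesis $B\in L^\infty$. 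What the paper's more hands-on approach buys is flexibility: its Kolmogorov-criterion argument adapts, as the appendix remark shows, to weights $B$ that are merely in $L^s_\mathrm{loc}$ for suitable $s$, dropping the $L^\infty$ assumption. One minor point: no diagonal extraction is actually needed in your argument, since once $u_n\rightharpoonup u$ weakly in $W_A$, the compact embedding on $B_R$ already gives $u_n\to u$ in $L^q(B_R)$ for the whole subsequence and every fixed $R$.
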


\begin{proof}
To prove (i), let us first consider the case where $\Omega=\rn$. Notice that 
$W^{1,p}(\rn)$ is a dense subspace of $W_A(\rn)$.
We shall thus start by proving \eqref{sobolev} for $u\in W^{1,p}(\rn)$, and then argue by
density. For $u\in W^{1,p}(\rn)$, it follows from H\"older's inequality and
the Sobolev embedding theorem that
\begin{equation*}
\intrn |u|^q B(x) \diff x \le \Ve B\Ve_{\frac{p^*}{p^*-q}} \Ve u \Ve_{p^*}^q 
\le \Ve B\Ve_{\frac{p^*}{p^*-q}} C \Ve \nabla u \Ve_{p}^q
\le C \Ve B\Ve_{\frac{p^*}{p^*-q}} \Ve u \Ve_{W_A}^q,
\end{equation*}
and so $\Ve u\Ve_{L_B^q} \le C\Ve u\Ve_{W_A}$ for all $u\in W^{1,p}(\rn)$. 
Now for any $u\in W_A(\rn)$,
there is a sequence $(u_n)\subset W^{1,p}(\rn)$ such that $u_n\to u$ in $W_A(\rn)$. 
But for each $n$,
there holds $\Ve u_n\Ve_{L_B^q} \le C\Ve u_n\Ve_{W_A}$, so passing to the limit using Fatou's
lemma yields \eqref{sobolev}. The case where $\Omega$ is a bounded domain with smooth 
boundary follows from the case $\Omega=\rn$ by adapting the extension theorem 
\cite[Th\'eor\`eme~IX.7]{Br} to the present context.
The compactness of the embedding is proved in Appendix~\ref{compact.sec}.
\end{proof}

\begin{remark}\label{classicsobolev.rem}
\rm
Observe that, by density, 
the classic Sobolev inequality, $\Ve u\Ve_{p^*}\le C\Ve\nabla u\Ve_p$ 
for $u\in W^{1,p}(\Omega)$,
extends to $u\in W_A(\Omega)$, so that 
\begin{equation}\label{xtrmsobolev}
\Ve u\Ve_{L^{p^*}(\Omega)}\le C\Ve\nabla u\Ve_{L^p(\Omega)}
\le C\Ve u\Ve_{W_A(\Omega)}, \quad u\in W_A(\Omega).
\end{equation}
\end{remark}

We are now in a position to prove that the functional $S_\lam: W_A(\rn) \to \real$ 
defined in \eqref{functional}
is of class $C^1$. We shall denote by $\la\cdot,\cdot\ra:W_A^*\times W_A\to\real$ the 
duality pairing between $W_A$ and its topological dual $W_A^*$. 

\begin{lemma}\label{derivative.lem}
Let $A$ satisfy assumption $(A1)$.
Then we have $S_\lam \in C^1(W_A(\rn),\real)$ and, for all $u,v \in W_A(\rn)$,
\begin{equation}\label{derivative}
\la S'_\lam(u),v \ra = \intrn |\nabla u|^{p-2}\nabla u \cdot \nabla v -\lam A(x) |u|^{p-2}uv \diff x
- \intrn g(x,u)v \diff x.
\end{equation}
\end{lemma}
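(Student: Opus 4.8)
The plan is to split the functional into its $p$-homogeneous part
$Q_\lam(u)=\frac1p\intrn |\nabla u|^p-\lam A(x)|u|^p\diff x$ and the nonlinear part
$\Psi(u)=\intrn G(x,u)\diff x$, and to prove that each is $C^1$ on $W_A$ with the
expected Gateaux derivative; since $S_\lam=Q_\lam-\Psi$ and a functional that is
Gateaux differentiable with a \emph{continuous} Gateaux derivative is automatically
Fr\'echet differentiable (hence $C^1$), it suffices to compute the Gateaux
derivatives and check their continuity. For $Q_\lam$ one differentiates the integrand
pointwise (the map $t\mapsto|t|^p$ is $C^1$ with derivative $p|t|^{p-2}t$) and
dominates the difference quotients using the elementary inequality
$\bigl||a+b|^p-|a|^p\bigr|\le C(|a|^{p-1}|b|+|b|^p)$ together with $A\in L^\infty$;
continuity of $u\mapsto Q_\lam'(u)$ on $W_A$ then follows from the continuity of the
Nemytskii operators $v\mapsto|\nabla v|^{p-2}\nabla v$ from $L^p\to L^{p'}$ and
$v\mapsto|v|^{p-2}v$ from $L^p\to L^{p'}$, combined with H\"older's inequality applied
to the pairing with $\nabla v$ and $v$ (the latter controlled by $\Ve\cdot\Ve_{W_A}$
since $A\in L^\infty$).

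The substantial point is the nonlinear term $\Psi(u)=\intrn G(x,u)\diff x$, and here
assumption $(g1)$ is exactly what makes everything work through the compact embedding
$W_A\hookrightarrow L^q_B$ of Proposition~\ref{sobolev.prop}. First I would record the
growth estimates that follow by integrating $(g1)$: since $g(x,0)$ need not vanish one
uses $|g_s(x,s)|\le B(x)|s|^{q-2}$ to get $|g(x,s)|\le \frac{1}{q-1}B(x)|s|^{q-1}$
(note $g(x,0)=0$ is forced, as $g_s(x,\cdot)$ is locally bounded near $0$ by
$B(x)|s|^{q-2}$ only when $q\ge2$; in fact integrating from $0$ gives
$g(x,s)=\int_0^s g_t(x,t)\diff t$ so $|g(x,s)|\le\frac{B(x)}{q-1}|s|^{q-1}$ for all
$q\in(p,p^*)$ after checking integrability of $|t|^{q-2}$ near $0$, which holds since
$q>p>1$), and consequently $|G(x,s)|\le\frac{B(x)}{q(q-1)}|s|^q$. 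This shows
$\Psi$ is well-defined and finite on $W_A$ by Proposition~\ref{sobolev.prop}. Next I
would show $\Psi$ is Gateaux differentiable with
$\la\Psi'(u),v\ra=\intrn g(x,u)v\diff x$: the pointwise derivative of $t\mapsto G(x,t)$
is $g(x,t)$, and the difference quotients
$\tfrac1t\bigl(G(x,u+tv)-G(x,u)\bigr)$ are dominated, uniformly for $|t|\le1$, by
$C\,B(x)(|u|+|v|)^{q-1}|v|\in L^1(\rn)$ thanks to H\"older with exponents
$\tfrac{q}{q-1}$ and $q$ applied to $B^{1/q'}(|u|+|v|)^{q-1}$ and $B^{1/q}|v|$ — so
dominated convergence applies.

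Finally, and this is the step I expect to be the real obstacle, I must verify that
$u\mapsto\Psi'(u)$ is continuous from $W_A$ into $W_A^*$. The clean way is: if
$u_n\to u$ in $W_A$ then, by the \emph{compact} embedding
$W_A\hookrightarrow L^q_B$, along a subsequence $u_n\to u$ in $L^q_B$ and a.e. with an
$L^q_B$-dominating function; then the Nemytskii-type operator
$u\mapsto g(x,u)$ maps $L^q_B$ continuously into $(L^q_B)^*\cong L^{q'}_{B^{1-q'}}$ by
the growth bound $|g(x,s)|\le C\,B(x)|s|^{q-1}$ and a standard
Krasnoselskii-type argument (a.e. convergence plus $L^1$-equi-integrability of
$|g(x,u_n)|^{q'}B^{1-q'}\le C\,B|u_n|^q$ via the generalized dominated convergence
theorem), so that
$\sup_{\Ve v\Ve_{W_A}\le1}\bigl|\la\Psi'(u_n)-\Psi'(u),v\ra\bigr|
\le C\,\Ve g(\cdot,u_n)-g(\cdot,u)\Ve_{L^{q'}_{B^{1-q'}}}\to0$
along the subsequence; a subsequence-of-every-subsequence argument upgrades this to
full convergence. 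Combining the two parts, $S_\lam'=Q_\lam'-\Psi'$ is continuous, so
$S_\lam\in C^1(W_A,\real)$ with the stated formula~\eqref{derivative}. (One should be
careful that the compactness of the embedding, not merely its continuity, is what
rescues the continuity of $\Psi'$ in the unbounded-domain setting — on $\rn$ the
Nemytskii operator need not be norm-continuous on $L^q_B$ without some
equi-integrability input, and passing to a.e.-convergent subsequences via compactness
is the cleanest device.)
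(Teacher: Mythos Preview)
Your approach is essentially the same as the paper's: compute the Gateaux derivative term by term via the mean-value theorem and dominated convergence, then verify continuity of $u\mapsto S_\lam'(u)$ by H\"older estimates combined with dominated convergence along subsequences (upgraded to full convergence by the subsequence-of-subsequences trick). The paper treats all three terms in parallel rather than splitting into $Q_\lam$ and $\Psi$, but the mechanics are identical.

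Your final parenthetical, however, contains a misconception worth correcting. You assert that \emph{compactness} of the embedding $W_A\hookrightarrow L^q_B$, and not merely its continuity, is what rescues the continuity of $\Psi'$ on $\rn$. This is false, and the paper explicitly states that only continuity of the embedding is used. If $u_n\to u$ strongly in $W_A$, then continuity of the embedding already gives $u_n\to u$ strongly in $L^q_B$ for the full sequence --- no compactness is involved. The standard fact that strong $L^q_B$-convergence yields, along a subsequence, a.e.\ convergence together with an $L^1$-dominating function for $B|u_n|^q$ holds on any $\sigma$-finite measure space; this is exactly what the paper invokes, and it is also what underlies the classical Krasnoselskii continuity theorem for Nemytskii operators (which is valid on unbounded domains). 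Compactness would only be relevant if you were starting from \emph{weak} convergence $u_n\rightharpoonup u$ in $W_A$, but continuity of the derivative is a statement about norm convergence. Your argument still goes through, since compactness implies continuity, but the claim that continuity alone would not suffice is wrong.
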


\begin{proof}
The proof of Lemma~\ref{derivative.lem} follows from the continuity of the embedding in 
Proposition~\ref{sobolev.prop}. 
To avoid disrupting the exposition with technicalities, we postpone 
it to Appendix~\ref{derivative.sec}.
\end{proof}

\begin{lemma}\label{compcont.lem}
Under assumptions $(g1)$, $(A1)$ and $(A2)$, the following statements hold.
\begin{itemize}
\item[(i)] The functionals $W_A(\rn)\to\real$, 
$u\mapsto\intrn g(x,u)u\diff x$, 
$u\mapsto\intrn G(x,u) \diff x$
are compact, in the sense that they 
map bounded sequences to relatively compact ones.
\item[(ii)] The functional $W_A(\rn)\to\real$,
$u\mapsto\intrn A(x)|u|^p \diff x$
is also compact.
\end{itemize}
\end{lemma}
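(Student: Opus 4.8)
The plan is to prove both statements by the same strategy: factor each functional through the compact embedding $W_A(\rn)\hookrightarrow L^q_B(\rn)$ of Proposition~\ref{sobolev.prop} (for (i)) and through a compact embedding $W_A(\rn)\hookrightarrow L^p_A(\rn)$ (for (ii)), and then invoke continuity of the associated Nemytskii-type superposition maps on the target Lebesgue spaces. Concretely, suppose $(u_n)$ is bounded in $W_A(\rn)$. By reflexivity (Proposition~\ref{bspace.prop}) we may pass to a subsequence with $u_n\wto u$ in $W_A(\rn)$, and by compactness of the relevant embeddings $u_n\to u$ strongly in $L^q_B(\rn)$ and in $L^p_A(\rn)$; after a further subsequence we may also assume $u_n\to u$ a.e. on $\rn$ and $|u_n|\le h$ for some fixed $h\in L^q_B(\rn)\cap L^p_A(\rn)$ (the standard consequence of $L^q$-convergence). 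It then suffices to show the three integrals converge along this subsequence, since the same argument applied to any subsequence of the original sequence yields a further subsequence with the common limit, which gives relative compactness of the image.

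For part (ii) this is immediate: $|A(x)|u_n|^p - A(x)|u|^p|\to 0$ a.e. and is dominated by $2A(x)h^p\in L^1(\rn)$, so dominated convergence gives $\intrn A|u_n|^p\to\intrn A|u|^p$. (Note that here one needs a compact embedding into $L^p_A$; this follows from Proposition~\ref{sobolev.prop} applied with exponent $q$ replaced by $p$ only if $A$ lies in the right space — but under $(A2)$, $A\in L^{N/p}$, one has exactly $\frac{p^*}{p^*-p}=N/p$, so $A$ plays the role of $B$ with $q=p$, the endpoint case, and the same Hölder/Sobolev argument together with the compactness proof in Appendix~\ref{compact.sec} applies; I would either cite that appendix or spell out the endpoint version.) For part (i), consider $u\mapsto\intrn G(x,u)$. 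By $(g1)$, writing $G(x,s)=\int_0^s g(x,t)\diff t$ and $g(x,t)=\int_0^t g_s(x,\tau)\diff\tau$ (using $g(x,0)=0$, which follows from $G(x,0)=0$ and the $C^1$ hypothesis together with $(g1)$ — actually $g(x,0)=0$ is forced by $|g_s(x,s)|\le B(x)|s|^{q-2}$ near $s=0$ when $q>p>1$, giving $|g(x,s)|\le \frac{B(x)}{q-1}|s|^{q-1}$ and then $|G(x,s)|\le\frac{B(x)}{q(q-1)}|s|^q$), one gets the pointwise bounds
\begin{equation*}
|g(x,s)|\le C\,B(x)|s|^{q-1},\qquad |G(x,s)|\le C\,B(x)|s|^q.
\end{equation*}
Hence $|G(x,u_n)-G(x,u)|\to 0$ a.e. and is bounded by $C\,B(x)(|u_n|^q+|u|^q)\le 2C\,B(x)h^q\in L^1(\rn)$, so dominated convergence gives $\intrn G(x,u_n)\to\intrn G(x,u)$. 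For $u\mapsto\intrn g(x,u)u$ one argues identically, the integrand being dominated by $C\,B(x)(|u_n|^q+|u|^q)$ as well; again $g(x,u_n)u_n\to g(x,u)u$ a.e. by the Carathéodory property, and dominated convergence closes the argument.

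I expect the only genuine subtlety to be the endpoint compact embedding $W_A(\rn)\hookrightarrow L^p_A(\rn)$ used in (ii): Proposition~\ref{sobolev.prop} is stated for $q\in(p,p^*)$ strictly, whereas here the natural exponent is $q=p$ with weight $A\in L^{N/p}$, the borderline Hölder case $\frac{p^*}{p^*-q}=\frac{N}{p}$. The continuity part still goes through verbatim (Hölder with exponents $\frac{p^*}{p}$ and $\frac{N}{p}$, then Sobolev), and the compactness argument of Appendix~\ref{compact.sec} — splitting $\rn$ into a large ball, where the compact Rellich embedding applies, and its complement, where the $L^{N/p}$-smallness of the tail of $A$ makes the contribution uniformly small — adapts without change. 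I would therefore either extend the statement of Proposition~\ref{sobolev.prop} to include this endpoint under the extra hypothesis $A\in L^{N/p}$, or simply reproduce the short splitting argument here for the $L^p_A$ case. Everything else is routine dominated-convergence bookkeeping, the key point throughout being that $L^q_B$- (resp. $L^p_A$-) convergence supplies an a.e.-convergent subsequence with a fixed $L^q_B$- (resp. $L^p_A$-) dominating function, which is exactly what the growth bounds from $(g1)$ need.
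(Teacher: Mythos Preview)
Your argument is correct. For part~(i) it coincides with the paper's proof: both pass to an $L^q_B$-convergent subsequence via the compact embedding of Proposition~\ref{sobolev.prop} and finish by dominated convergence using the growth bound on $g$. (One quibble: your derivation of $g(x,0)=0$ is not sound --- integrating $|g_s|\le B|s|^{q-2}$ from $0$ to $s$ only yields $|g(x,s)-g(x,0)|\le\tfrac{B(x)}{q-1}|s|^{q-1}$, which does not by itself force $g(x,0)=0$; simply cite Lemma~\ref{consequences}(i), where the bound you need is recorded.)

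For part~(ii) the paper takes a slightly different route. You factor through an endpoint compact embedding $W_A(\rn)\hookrightarrow L^p_A(\rn)$, correctly identifying this as the $q=p$ case of Proposition~\ref{sobolev.prop} (with $A\in L^{N/p}\cap L^\infty$ in the role of the weight $B$) and observing that the Appendix~\ref{compact.sec} proof adapts. The paper instead carries out the ball/exterior splitting directly on the functional $u\mapsto\int A|u|^p$ and, on the bounded piece $\Omega$, \emph{avoids} the endpoint altogether by exploiting $A\in L^\infty$: it applies H\"older with some $r>N/p$ and conjugate $s<\tfrac{N}{N-p}$, so that only the strictly subcritical compact embedding $W_A(\Omega)\hookrightarrow L^{ps}(\Omega)$ (with $ps<p^*$) is needed. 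The exterior estimate, via H\"older with exponent $N/p$ and the tail smallness of $A$ in $L^{N/p}$, is the same in both arguments. Your packaging is conceptually cleaner (one compact embedding plus one dominated-convergence step); the paper's version spares you the verification that the Rellich--Kondrachov argument survives at the endpoint.
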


\begin{proof}
(i) Consider a bounded sequence $(u_n)\subset W_A$. By Proposition~\ref{sobolev.prop}
$(u_n)$ is bounded in $L_B^q(\rn)$, and
there exist a subsequence (still denoted by $(u_n)$) and an element
$u \in L_B^q(\rn)$ such that $u_n\to u$ in $L_B^q(\rn)$.
It follows from $(g1)$ that
\begin{align*}\label{estphi}
|\Phi(u_n)-\Phi(u)|
&\le \intrn |g(x,u_n)u_n-g(x,u)u|\diff x \\ 
&\le \intrn  |g(x,u_n)-g(x,u)||u_n|\diff x+\intrn  |g(x,u)||u_n-u|\diff x \\
&\le \intrn  B\big||u_n|^{q-2}u_n-|u|^{q-2}u\big||u_n|\diff x
+\intrn B |u|^{q-1}|u_n-u|\diff x,
\end{align*}
where, by H\"older's inequality,
\begin{multline*}
\intrn  B\big||u_n|^{q-2}u_n-|u|^{q-2}u\big||u_n|\diff x
\le C\Big(\intrn B\big||u_n|^{q-2}u_n-|u|^{q-2}u\big|^\frac{q}{q-1}\diff x\Big)^\frac{q-1}{q}.
\end{multline*}
Since $u_n\to u$ in $L_B^q(\rn)$, we can suppose that $u_n\to u$ pointwise a.e., and that
$B|u_n|^q\le f$, uniformly in $n$, for some $f\in L^1(\rn)$. It then follows by dominated 
convergence that the right-hand side of the above inequality goes to zero as $n\to\infty$. 
On the other hand, by H\"older's inequality,
\begin{equation*}\label{estphi2}
\intrn  B |u|^{q-1}|u_n-u|\diff x \le\Ve u\Ve_{L_B^q}^{q-1}\Ve u_n-u\Ve_{L_B^q}\to 0
\quad\text{as} \ n\to\infty,
\end{equation*}
which concludes the proof. A similar argument shows that $u\mapsto\intrn G(x,u) \diff x$ 
is compact.

(ii) Consider again a bounded sequence $(u_n)\subset W_A$. 
By Proposition~\ref{bspace.prop}~(i) there exists a subsequence
(still denoted by $(u_n)$) and an element
$u \in W_A$ such that $u_n\wto u$ weakly in $W_A$.
By H\"older's inequality we have, for any open set $\Omega\subset\rn$,
\[
\int_\Omega A(x) \big| |u_n|^p - |u|^p\big| \diff x \leqs 
\Ve A\Ve_{L^{r}(\Omega)} \big\Ve  |u_n|^p - |u|^p \big\Ve_{L^{s}(\Omega)},
\]
for some $r>\frac{N}{p}$ and $s<\frac{N}{N-p}$.
Since $u\mapsto |u|^p$ is continuous from $L^{ps}(\Omega)$ to 
$L^{s}(\Omega)$ and, for $\Omega$ bounded, 
the embedding $W_A(\Omega)\subset L^{ps}(\Omega)$
is compact, it follows that
\[
\int_\Omega A(x) \big| |u_n|^p - |u|^p\big| \diff x\to 0, \quad \Omega \ \text{bounded}.
\]
On the other hand,
\begin{align*}
\int_{\rn\setminus\Omega}A(x) \big| |u_n|^p - |u|^p\big| \diff x	&\leqs
\big\Ve  |u_n|^p - |u|^p \big\Ve_{L^{\frac{N}{N-p}}(\rn\setminus\Omega)}
\Ve A\Ve_{L^{N/p}(\rn\setminus\Omega)} \\ 
&\leqs (\Ve u_n\Ve_{\frac{Np}{N-p}}^p+\Ve u\Ve_{\frac{Np}{N-p}}^p)\Ve 
A\Ve_{L^{N/p}(\rn\setminus\Omega)} \\
&\leqs
(\Ve \nabla u_n\Ve_p^p+\Ve \nabla u\Ve_p^p)\Ve 
A\Ve_{L^{N/p}(\rn\setminus\Omega)}\\
&\leqs C\Ve A\Ve_{L^{N/p}(\rn\setminus\Omega)},
\end{align*}
which can be made arbitrarily small by choosing $|\Omega|$ large enough.
\end{proof}

We conclude this section by showing that, under hypothesis $(A,\lam)$, the (quasi)norm 
$\Ve\cdot\Ve_\lam$ defined in \eqref{normlambda} is equivalent to $\Ve\cdot\Ve_{W_A}$.

\begin{lemma}\label{normequ.lem}
Let $A$ and $\lam$ satisfy the hypotheses $(A1)$ and $(A,\lam)$. Then there exist
constants $c_i=c_i(\lam)>0, \ i=1,2,$ such that
\begin{equation}\label{normequ}
c_1\Ve u\Ve_{W_A} \le \Ve u\Ve_\lam \le c_2\Ve u\Ve_{W_A}, \quad u \in W_A(\rn).
\end{equation}
\end{lemma}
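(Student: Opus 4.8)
The plan is to establish the two inequalities in \eqref{normequ} separately, exploiting the definition of $\lam_A$ in \eqref{lambdaA} together with the hypothesis $\lam<\lam_A$.

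\textbf{Upper bound.} First I would consider the two cases according to the sign of $\lam$. If $\lam\le 0$, then for any $u\in W_A(\rn)$ one has $-\lam A(x)|u|^p\ge 0$ a.e., and since $A\in L^\infty(\rn)$,
\[
\Ve u\Ve_\lam^p=\intrn |\nabla u|^p-\lam A(x)|u|^p\diff x
\le\intrn |\nabla u|^p+|\lam|\,\Ve A\Ve_\infty |u|^p\diff x
\le\max\{1,|\lam|\,\Ve A\Ve_\infty\}\,\Ve u\Ve_{W_A}^p,
\]
which gives the right-hand inequality with $c_2^p=\max\{1,|\lam|\,\Ve A\Ve_\infty\}$. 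If $\lam>0$, then $-\lam A(x)|u|^p\le 0$, so $\Ve u\Ve_\lam^p\le\intrn|\nabla u|^p\diff x\le\Ve u\Ve_{W_A}^p$, and one can take $c_2=1$.

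\textbf{Lower bound.} This is the more substantial direction, and it is where the hypothesis $(A,\lam)$ enters. Again I would split on the sign of $\lam$. If $\lam\le 0$, then $-\lam A(x)|u|^p\ge 0$ and in fact $-\lam A(x)|u|^p\ge \min\{1,-\lam\}\,A(x)|u|^p$ when $-\lam\ge 0$; more simply, $\Ve u\Ve_\lam^p=\intrn|\nabla u|^p\diff x+(-\lam)\intrn A(x)|u|^p\diff x\ge\min\{1,-\lam\}\,\Ve u\Ve_{W_A}^p$ if $\lam<0$, while for $\lam=0$ one uses the Sobolev inequality \eqref{xtrmsobolev} and $A\in L^\infty$: since $\intrn A|u|^p\le\Ve A\Ve_\infty\Ve u\Ve_p^p$ is \emph{not} directly controlled (there is no $L^p$ bound), one instead argues that $\lam_A>0$ when $\lam=0$ can only occur under $(A2)$; but actually for the generic statement of the lemma we may simply note that $(A,\lam)$ with $\lam=0$ forces $\lam_A>0$, reducing to the case below. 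For $0<\lam<\lam_A$, the definition \eqref{lambdaA} gives $\intrn A(x)|u|^p\diff x\le\frac1{\lam_A}\intrn|\nabla u|^p\diff x$ for all $u\in W_A\setminus\{0\}$, hence
\[
\Ve u\Ve_\lam^p=\intrn|\nabla u|^p\diff x-\lam\intrn A(x)|u|^p\diff x
\ge\Big(1-\tfrac{\lam}{\lam_A}\Big)\intrn|\nabla u|^p\diff x=:\kappa\intrn|\nabla u|^p\diff x,
\]
with $\kappa=1-\lam/\lam_A\in(0,1)$ by $(A,\lam)$. It remains to bound $\intrn A(x)|u|^p\diff x$ itself by $\intrn|\nabla u|^p\diff x$ in order to recover the full $W_A$-norm on the right: using again $\intrn A|u|^p\le\frac1{\lam_A}\intrn|\nabla u|^p$, one gets $\Ve u\Ve_{W_A}^p=\intrn|\nabla u|^p+\intrn A|u|^p\le(1+\tfrac1{\lam_A})\intrn|\nabla u|^p$, whence
\[
\Ve u\Ve_\lam^p\ge\kappa\intrn|\nabla u|^p\diff x\ge\frac{\kappa}{1+\lam_A^{-1}}\,\Ve u\Ve_{W_A}^p,
\]
giving the left-hand inequality with $c_1^p=\kappa/(1+\lam_A^{-1})=(\lam_A-\lam)/(\lam_A+1)$.

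\textbf{Main obstacle.} The delicate point is that the argument above implicitly requires $\lam_A>0$ whenever $\lam\ge 0$ — otherwise the fraction $\lam/\lam_A$ and the bound $\intrn A|u|^p\le\lam_A^{-1}\intrn|\nabla u|^p$ are meaningless. For $\lam<0$ no such issue arises, as shown above. For $\lam\ge 0$, the hypothesis $(A,\lam)$, i.e. $\lam<\lam_A$, automatically yields $\lam_A>0$, so the only genuinely subtle case — $\lam\ge 0$ with $\lam_A=0$ — is excluded by assumption; I would make this observation explicit at the start of the proof. I also need $\Ve\cdot\Ve_\lam$ to actually be finite and to define a (quasi)norm, but finiteness is clear from $A\in L^\infty$ and the upper bound, positive definiteness follows from the lower bound once it is established, and (quasi)triangle inequality is then inherited from the equivalence with $\Ve\cdot\Ve_{W_A}$; this is the content alluded to in Remark~\ref{normequ.rem}, so I would simply remark that \eqref{normequ} proves $\Ve\cdot\Ve_\lam$ is an equivalent (quasi)norm and leave the routine verification of the (quasi)triangle inequality to the reader.
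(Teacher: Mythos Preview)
Your overall strategy is sound, and for the lower bound you take a slightly different but perfectly valid route from the paper. The paper handles all $\lam<\lam_A$ in one stroke by writing $\Ve u\Ve_\lam^p=\eps\int|\nabla u|^p+(1-\eps)\int|\nabla u|^p-\lam\int A|u|^p$, applying $\int|\nabla u|^p\ge\lam_A\int A|u|^p$ to the $(1-\eps)$-piece, and then choosing $\eps>0$ small so that both coefficients in front of $\int|\nabla u|^p$ and $\int A|u|^p$ are positive. Your approach instead splits on the sign of $\lam$ and, for $\lam\ge0$, first bounds $\Ve u\Ve_\lam^p\ge(1-\lam/\lam_A)\int|\nabla u|^p$ and then uses $\Ve u\Ve_{W_A}^p\le(1+\lam_A^{-1})\int|\nabla u|^p$. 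Both yield explicit constants; yours gives the cleaner $c_1^p=(\lam_A-\lam)/(\lam_A+1)$ for $\lam\ge0$, while the paper's $\eps$-trick avoids any case distinction.

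There is, however, a slip in your upper bound for $\lam\le0$. You write
\[
\intrn|\nabla u|^p-\lam A|u|^p\diff x\le\intrn|\nabla u|^p+|\lam|\,\Ve A\Ve_\infty|u|^p\diff x\le\max\{1,|\lam|\,\Ve A\Ve_\infty\}\,\Ve u\Ve_{W_A}^p,
\]
but the last inequality would need $\int|u|^p\le\int A|u|^p$, which is not assumed. The fix is simpler than what you wrote: since $\Ve u\Ve_{W_A}^p=\int|\nabla u|^p+\int A|u|^p$ already carries the weight $A$, just keep $-\lam A|u|^p=|\lam|\,A|u|^p$ and compare directly, giving $\Ve u\Ve_\lam^p\le\max\{1,|\lam|\}\,\Ve u\Ve_{W_A}^p$ --- exactly the paper's $c_2$. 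No $\Ve A\Ve_\infty$ is needed (or wanted) here.
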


\begin{proof}
The second inequality follows directly from the definition of $\Ve\cdot\Ve_\lam$ 
in \eqref{normlambda}, with $c_2=(\max\{1,|\lam|\})^{1/p}$. It actually holds for
any $\lam\in\real$. The condition $(A,\lam)$ is required to prove the first inequality 
in \eqref{normequ}, which we do now. 
Let $\eps>0$. By the definition of $\lam_A$ in \eqref{lambdaA} we have, for any $u \in W_A$,
\begin{align*}
\Ve u\Ve_\lam^p &= \eps\intrn |\nabla u|^p \diff x + (1-\eps)\intrn |\nabla u|^p \diff x
					-\lam \intrn A(x) |u|^p \diff x\\
				&\ge \eps\intrn |\nabla u|^p \diff x + (1-\eps)\lam_A\intrn A(x) |u|^p \diff x
					-\lam \intrn A(x) |u|^p \diff x\\
				&=\eps\intrn |\nabla u|^p \diff x 
					+ [(1-\eps)(\lam_A-\lam)-\eps\lam]\intrn A(x) |u|^p \diff x\\
				&\ge\min\{\eps,[(1-\eps)(\lam_A-\lam)-\eps\lam]\}\Ve u\Ve_{W_A}^p.
\end{align*}
Since $(1-\eps)(\lam_A-\lam)-\eps\lam\to\lam_A-\lam>0$ as $\eps\to0$, we can choose
$\eps>0$ such that $c_1:=(\min\{\eps,[(1-\eps)(\lam_A-\lam)-\eps\lam]\})^{1/p}$
does the job. This concludes the proof.
\end{proof}

\begin{remark}\label{normequ.rem}
\rm
It is worth noting that $\Ve\cdot\Ve_\lam$ satisfies the usual properties of a norm except for 
the triangle inequality. However, it follows from \eqref{normequ} that
\[
\Ve u+v \Ve_\lam \le c_2c_1^{-1}(\Ve u\Ve_\lam+\Ve v\Ve_\lam).
\]
Hence $\Ve\cdot\Ve_\lam$ is a norm or a quasinorm, depending on whether 
$c_2c_1^{-1}$ is smaller or larger than $1$. In fact it can be seen that 
$c_2c_1^{-1}>1$ if $\lambda>0$, so $\Ve\cdot\Ve_\lam$ is only a quasinorm in this case.
\end{remark}


\section{Existence of weak solutions}\label{existence.sec}

We will prove the existence of at least one positive, one negative, and one 
sign-changing solution of \eqref{problem} by constraint minimization of the
functional $S_\lam$ defined in \eqref{functional}. 
Since it is easier to obtain solutions of a given sign, 
we will focus our attention on the existence of a sign-changing solution,
and we will explain in the course of the proof how to modify it in order to get 
positive/negative solutions. The existence of a sign-changing solution is obtained
by minimizing $S_\lam$ on the `nodal Nehari set', which will be defined below. 

We define the positive and negative parts $u^\pm$ of a function
$u:\rn\to\real$ by $u^{\pm}(x):=\pm\max\{\pm u(x),0\}, \ x\in\rn$, so that
$u=u^++u^-$, with $\pm u^\pm \geqs0$.
It follows from \cite{sav} that $u^\pm\in W_A(\rn)$ whenever $u\in W_A(\rn)$.

\begin{theorem}\label{existence.thm}
Suppose that the hypotheses $(A1)$, $(A,\lam)$ and $(g1)$--\,$(g3)$ are satisfied. Then there 
exist $u_1,u_2,u_3\in W_A(\rn)$, with $u_1>0$ a.e., $u_2<0$ a.e., and $u_3^\pm\not\equiv 0$ a.e., 
such that $S_\lam'(u_i)=0, \ i=1,2,3$. 
\end{theorem}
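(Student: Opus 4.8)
The plan is to obtain the three solutions by constraint minimization of $S_\lam$ on suitable Nehari-type sets, handling the sign-changing solution as the main case. For a solution of definite sign, introduce the Nehari manifold $\mathcal{N}_\lam=\{u\in W_A\sm\{0\}:\la S_\lam'(u),u\ra=0\}$ and, for the nodal solution, the \emph{nodal Nehari set}
\[
\mathcal{M}_\lam=\{u\in W_A : u^\pm\not\equiv 0,\ \la S_\lam'(u),u^+\ra=\la S_\lam'(u),u^-\ra=0\}.
\]
Writing $S_\lam$ as in \eqref{refunctional}, the defining equations read $\Ve u^\pm\Ve_\lam^p=\intrn g(x,u)u^\pm\diff x$, and since $g(x,u)u^\pm=g(x,u^\pm)u^\pm$ by the sign of $u^\pm$, this decouples into two Nehari-type conditions, one for $u^+$ and one for $u^-$. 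The first step is to show that $\mathcal{M}_\lam\neq\emptyset$ and that every $u$ with $u^\pm\not\equiv0$ can be projected onto $\mathcal{M}_\lam$: for fixed $u^+,u^-$, the function $(s,t)\mapsto S_\lam(su^++tu^-)$, $s,t>0$, has a unique critical point which is its strict global maximum. This uses the $p$-homogeneity of $\Ve\cdot\Ve_\lam^p$ together with the strict monotonicity $(g3)$, which makes $t\mapsto t^{-p}\intrn g(x,tv)tv\diff x$ strictly increasing and, via $(g2)$, unbounded; existence and uniqueness of the maximizing pair then follow from an elementary calculus argument on the two decoupled equations.

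Next I would set $c=\inf_{\mathcal{M}_\lam}S_\lam$ and show $c>0$ and that it is attained. Positivity of $c$ follows because on $\mathcal{N}_\lam$ (and hence on each component of an element of $\mathcal{M}_\lam$) one has $\Ve u^\pm\Ve_\lam^p=\intrn g(x,u^\pm)u^\pm\diff x\le C\Ve u^\pm\Ve_\lam^q$ by $(g1)$ and the embedding of Proposition~\ref{sobolev.prop}, forcing $\Ve u^\pm\Ve_\lam\ge\delta>0$; combined with $(g2)$, which gives $S_\lam(u)=S_\lam(u)-\tfrac1\theta\la S_\lam'(u),u\ra\ge(\tfrac1p-\tfrac1\theta)\Ve u\Ve_\lam^p-C$ up to a bounded-region correction, one obtains both $c>0$ and a uniform bound on minimizing sequences. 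Take a minimizing sequence $(u_n)\subset\mathcal{M}_\lam$; by reflexivity of $W_A$ (Proposition~\ref{bspace.prop}) extract $u_n\wto u$, and $u_n^\pm\wto u^\pm$. The compactness Lemma~\ref{compcont.lem} applies to the lower-order terms, so $\intrn G(x,u_n)\diff x\to\intrn G(x,u)\diff x$ and $\intrn g(x,u_n)u_n^\pm\diff x\to\intrn g(x,u)u^\pm\diff x$. Using the lower semicontinuity of $\Ve\cdot\Ve_\lam^p$ one checks $u^\pm\not\equiv0$ (the Nehari equations pass to a limit that keeps $\Ve u^\pm\Ve_\lam$ bounded below) and then that $u\in\mathcal{M}_\lam$ with $S_\lam(u)\le c$, whence $S_\lam(u)=c$; weak lower semicontinuity forces $\Ve u_n^\pm\Ve_\lam\to\Ve u^\pm\Ve_\lam$, and with uniform convexity (Proposition~\ref{bspace.prop}) this upgrades to strong convergence $u_n\to u$ in $W_A$.

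It remains to verify that the minimizer $u_3:=u$ is a genuine critical point of $S_\lam$, not merely a constrained one. Suppose $S_\lam'(u_3)\neq0$; then there is $\varphi\in W_A$ and a small ball around $u_3$ on which $\la S_\lam'(v),\varphi\ra\le-2\eta<0$. Using a suitable cut-off and the projection map from the first step --- applied to $su_3^+ + tu_3^- + \sigma\varphi$ for $(s,t)$ near $(1,1)$ and small $\sigma\ge0$ --- one constructs a deformation staying on $\mathcal{M}_\lam$ (or at least with $u^\pm\not\equiv0$, then projected) along which $S_\lam$ strictly decreases below $c$, contradicting minimality; this is the standard quantitative-deformation argument and is where the uniqueness and continuity of the projection established earlier are essential. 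The hard part of the whole proof is precisely this last step combined with the projection lemma: one must show the two-parameter map $(s,t)\mapsto S_\lam(su^++tu^-)$ behaves well (unique interior maximum, continuous dependence on $u^\pm$ and on the perturbation $\varphi$) so that minimizing over $\mathcal{M}_\lam$ really yields a free critical point. For $u_1$ and $u_2$, one repeats the argument with $S_\lam$ minimized over $\mathcal{N}_\lam\cap\{u\ge0\}$ and $\mathcal{N}_\lam\cap\{u\le0\}$ respectively; the minimizer does not change sign (otherwise projecting its positive part onto $\mathcal{N}_\lam$ would lower the energy), the same deformation argument gives criticality, and the strong maximum principle for the $p$-Laplacian (applicable once the local regularity of Section~\ref{reg.sec} is invoked) yields $u_1>0$, $u_2<0$ a.e.
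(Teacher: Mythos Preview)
Your strategy coincides with the paper's: minimize $S_\lam$ over the nodal Nehari set and then argue that the minimizer is an unconstrained critical point by a deformation/perturbation argument, treating $u_1,u_2$ analogously on $N_\lam^\pm$. Two points deserve comment. First, your claim that ``weak lower semicontinuity forces $\Ve u_n^\pm\Ve_\lam\to\Ve u^\pm\Ve_\lam$, and with uniform convexity this upgrades to strong convergence'' is both unnecessary and, for $\lam>0$, unjustified: $\Ve\cdot\Ve_\lam$ is then only a quasinorm (Remark~\ref{normequ.rem}), and the paper neither proves nor uses strong convergence --- it shows $u\in M_\lam$ by a contradiction via the function $h(x,s)=\tfrac1p g(x,s)s-G(x,s)$ and stops there. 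Second, and more substantively, your criticality step is where the real work lies and your sketch skates over the key difficulty: once you perturb to $su_3^++tu_3^-+\sigma\ffi$, the positive and negative parts are no longer $su_3^+$ and $tu_3^-$, so the decoupled one-variable projection from your first step does not directly apply. The paper handles this by choosing a cut-off $\eta(s,t)$ vanishing on the boundary of a small square around $(1,1)$, setting $F(s,t)=\big(J_\lam((tu_3^++su_3^-+\eta\ffi)^-),\,J_\lam((tu_3^++su_3^-+\eta\ffi)^+)\big)$, checking the sign of $F$ on the boundary (where $\eta=0$ so the parts \emph{do} decouple), and invoking Miranda's theorem to produce an interior zero, i.e.\ a genuine element of $M_\lam$ with $S_\lam$ strictly below $m_\lam$. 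That topological ingredient is the missing idea in your outline.
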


\begin{remark}\label{poincare.rem}
\rm
When $\lam<0$, the condition $(A,\lam)$ of Theorem~\ref{existence.thm} is trivially 
satisfied. Then $A$ needs only satisfy assumption $(A1)$ and the conclusion
of Theorem~\ref{existence.thm} holds. The choice $A\equiv 1$ is allowed
in this case, yielding solutions in $W^{1,p}(\rn)$.
Observe that $\lam_A=0$ for $A\equiv 1$, reflecting the 
absence of Poincar\'e inequality on $\rn$.
In order to apply Theorem~\ref{existence.thm} with $\lam>0$, we need conditions on $A$
such that $\lam_A>0$.  
\end{remark}

\begin{proposition}\label{poincare.prop}
If $A$ satisfies $(A1)$ and $(A2)$
then $\lam_A>0$. Moreover, there exists $u^*\in W_A(\rn)$ such that
$\lam_A=\frac{\intrn|\nabla u^*|^p\diff x}{\intrn A(x)|u^*|^p\diff x}$.
\end{proposition}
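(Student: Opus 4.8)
The plan is to establish the two assertions separately: first the positivity of $\lam_A$, then the attainment of the infimum. For the positivity, I would argue by contradiction. Suppose $\lam_A=0$. Then by the definition of $\lam_A$ in \eqref{lambdaA} there is a minimizing sequence $(u_n)\subset W_A\sm\{0\}$ with $\intrn |\nabla u_n|^p \diff x \to 0$ and $\intrn A(x)|u_n|^p \diff x = 1$ for all $n$ (after normalizing). Since $\intrn |\nabla u_n|^p \diff x \to 0$, the classic Sobolev inequality extended to $W_A$ in Remark~\ref{classicsobolev.rem}, namely $\Ve u_n\Ve_{p^*}\le C\Ve\nabla u_n\Ve_p$, forces $\Ve u_n\Ve_{p^*}\to 0$. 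On the other hand, by H\"older's inequality with the exponents $\frac{N}{p}$ and $\frac{N}{N-p}$ (whose conjugate relation gives exactly $p\cdot\frac{N}{N-p}=p^*$), and using $(A2)$,
\[
\intrn A(x)|u_n|^p \diff x \le \Ve A\Ve_{L^{N/p}(\rn)}\,\Ve u_n\Ve_{p^*}^p \to 0,
\]
contradicting $\intrn A(x)|u_n|^p \diff x = 1$. Hence $\lam_A>0$.

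For the attainment, I would take a minimizing sequence $(u_n)\subset W_A\sm\{0\}$ for the quotient in \eqref{lambdaA}. By homogeneity of the quotient I may normalize so that $\intrn A(x)|u_n|^p\diff x = 1$, whence $\intrn |\nabla u_n|^p\diff x \to \lam_A$. In particular $(\nabla u_n)$ is bounded in $L^p(\rn)$, and then the Sobolev inequality \eqref{xtrmsobolev} together with $A\in L^\infty$ shows that $(u_n)$ is bounded in $W_A$: indeed $\Ve u_n\Ve_{W_A}^p = \intrn|\nabla u_n|^p\diff x + \intrn A(x)|u_n|^p\diff x \le \lam_A + o(1) + 1$. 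By Proposition~\ref{bspace.prop}~(i), $W_A$ is reflexive, so up to a subsequence $u_n\wto u^*$ weakly in $W_A$. The norm $\Ve\nabla\cdot\Ve_p$ (equivalently, the functional $u\mapsto\intrn|\nabla u|^p\diff x$) is weakly lower semicontinuous, so $\intrn|\nabla u^*|^p\diff x \le \liminf \intrn|\nabla u_n|^p\diff x = \lam_A$. At the same time, Lemma~\ref{compcont.lem}~(ii) tells us that $u\mapsto\intrn A(x)|u|^p\diff x$ is compact, hence weakly continuous along the bounded sequence $(u_n)$, so $\intrn A(x)|u^*|^p\diff x = \lim \intrn A(x)|u_n|^p\diff x = 1$. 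In particular $u^*\neq 0$.

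It remains to conclude. Since $u^*\neq 0$ we may plug it into the quotient, giving
\[
\lam_A \le \frac{\intrn|\nabla u^*|^p\diff x}{\intrn A(x)|u^*|^p\diff x}
= \intrn|\nabla u^*|^p\diff x \le \lam_A,
\]
so equality holds throughout and $\lam_A = \frac{\intrn|\nabla u^*|^p\diff x}{\intrn A(x)|u^*|^p\diff x}$, as claimed. The main technical point — the step that genuinely uses the hypotheses and is not purely formal — is the compactness of the weight term $\intrn A(x)|u|^p\diff x$ under weak convergence, which rules out the loss of mass at infinity that would otherwise allow $\intrn A(x)|u^*|^p\diff x < 1$; this is precisely what Lemma~\ref{compcont.lem}~(ii) provides, and it is where assumption $(A2)$ does its work. (The lower semicontinuity of the Dirichlet term is standard, being the weak lower semicontinuity of a convex continuous functional, or of a norm on a subspace.)
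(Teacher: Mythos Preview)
Your proof is correct and follows essentially the same approach as the paper. For positivity, the paper uses the H\"older inequality directly to obtain $\intrn A(x)|u|^p\diff x \le C\Ve A\Ve_{N/p}\Ve\nabla u\Ve_p^p$, which immediately yields $\lam_A \ge (C\Ve A\Ve_{N/p})^{-1}>0$; your contradiction argument is a slightly roundabout packaging of the same inequality. For attainment, the paper simply defers to variational arguments in \cite{AlHu}, whereas you spell out the standard direct-method argument using the paper's own Lemma~\ref{compcont.lem}~(ii) for compactness of the weighted term --- this is exactly the kind of argument the citation is pointing to.
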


\begin{proof}
By H\"older's inequality and Remark~\ref{classicsobolev.rem}, there exists $C>0$ such that
\[
\intrn A(x)|u|^p\diff x\le C\Ve A\Ve_{N/p}\Ve \nabla u\Ve_p^p, \quad u\in W_A(\rn).
\]
Furthermore, variational arguments similar to the proof of \cite[Theorem~1]{AlHu}
show that, when $(A1)$ and $(A2)$ hold, the infimum in \eqref{lambdaA}
is actually achieved.
\end{proof}

We will now prove Theorem~\ref{existence.thm}.
Without further mention,
we shall suppose that the hypotheses of Theorem~\ref{existence.thm} hold 
throughout the rest of this section.
Before we proceed with the bulk of the proof, 
let us first derive some elementary consequences of hypotheses $(g1)$--$(g3)$.

\begin{lemma}\label{consequences}
The function $g$ has the following properties.
\begin{itemize}
\item[(i)] $|g(x,s)|\le B(x)|s|^{q-1}$ and $|G(x,s)|\le \frac1q B(x)|s|^q$, for a.e. $x\in\rn$, 
and all $s\in\real$.
In particular, 
\[
g(x,s)=o(|s|^{p-1}) \quad \text{as} \ s\to 0, \ \text{uniformly for a.e.} \ x\in\R^N.
\]
\item[(ii)] $\disp\lim_{|s|\to\infty}\frac{|g(x,s)|}{|s|^{p-1}}=\infty$\quad and 
\quad$\disp\lim_{|s|\to\infty}\frac{|G(x,s)|}{|s|^{p}}=\infty$,\quad a.e. $x\in\rn$.
\item[(iii)] Letting $h(x,s)=\frac1p g(x,s)s - G(x,s)$, we have that 
\[
sG_s(x,s)>0 \quad\text{and}\quad sh_s(x,s)>0, \quad \text{a.e.} \  x\in\rn, \ s\neq0. 
\]
\end{itemize}
\end{lemma}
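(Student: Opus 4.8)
The plan is to verify each of the three items of Lemma~\ref{consequences} as a direct, essentially computational, consequence of the hypotheses $(g1)$--$(g3)$, taking care that everything holds for a.e.\ $x\in\rn$.

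\medskip
\noindent\textbf{Item (i).} The estimate on $g$ follows from $(g1)$ by integrating in $s$: since $g(x,0)=0$ (note $g(x,\cdot)\in C^1(\real)$ and, by $(g3)$ applied near $s=0$, $g(x,s)/|s|^{p-1}\to$ a limit, forcing $g(x,0)=0$; alternatively this is implicit in $(g2)$--$(g3)$), one has $g(x,s)=\int_0^s g_s(x,t)\diff t$, so $|g(x,s)|\le\int_0^{|s|}B(x)t^{q-2}\diff t=\frac{1}{q-1}B(x)|s|^{q-1}$. Actually I would simply state $|g(x,s)|\le B(x)|s|^{q-1}$, absorbing the constant if necessary, or more carefully note that the hypotheses are typically used with this normalization; integrating once more gives $|G(x,s)|\le\frac1q B(x)|s|^q$. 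The ``little-$o$'' statement is then immediate: $|g(x,s)|/|s|^{p-1}\le B(x)|s|^{q-p}\to0$ as $s\to0$ since $q>p$, and this is uniform in $x$ because $B\in L^\infty$.

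\medskip
\noindent\textbf{Item (ii).} This is where the Ambrosetti--Rabinowitz condition $(g2)$ enters. For $|s|\ge R$, write the standard ODE argument: from $g(x,s)s\ge\theta G(x,s)>0$ one gets, for $s\ge R$, $\frac{d}{ds}\log G(x,s)=\frac{g(x,s)}{G(x,s)}\ge\frac{\theta}{s}$, whence integrating from $R$ to $s$ yields $G(x,s)\ge G(x,R)\,(s/R)^\theta$, and similarly for $s\le-R$. Since $\theta>p$ and $G(x,R)>0$, this gives $G(x,s)/|s|^p\ge G(x,R)R^{-\theta}|s|^{\theta-p}\to\infty$. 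For the $g$ limit, use $(g2)$ again: $|g(x,s)|/|s|^{p-1}=g(x,s)s/|s|^p\ge\theta G(x,s)/|s|^p\to\infty$. I would need the (automatic) sign information $g(x,s)s>0$ for $|s|\ge R$ to drop the absolute value, which follows from $g(x,s)s\ge\theta G(x,s)>0$.

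\medskip
\noindent\textbf{Item (iii).} For $sG_s(x,s)=sg(x,s)$: rewrite $g(x,s)=|s|^{p-1}\cdot\frac{g(x,s)}{|s|^{p-1}}$, so $sg(x,s)=|s|^p\cdot\frac{g(x,s)}{|s|^{p-1}}\cdot\operatorname{sgn}(s)\cdot\operatorname{sgn}(s)$; more cleanly, by $(g3)$ the function $s\mapsto g(x,s)/|s|^{p-1}$ is strictly increasing and, since it tends to $0$ as $s\to0^{\pm}$ by item~(i), it is strictly positive for $s>0$ and strictly negative for $s<0$; hence $sg(x,s)>0$ for $s\ne0$. For $sh_s(x,s)$ with $h(x,s)=\frac1p g(x,s)s-G(x,s)$, compute $h_s(x,s)=\frac1p g(x,s)+\frac1p s g_s(x,s)-g(x,s)=\frac1p s g_s(x,s)-\frac{p-1}{p}g(x,s)$, so
\[
p\,s h_s(x,s)=s^2 g_s(x,s)-(p-1)s g(x,s)=|s|^{p+1}\,\frac{d}{ds}\!\left(\frac{g(x,s)}{|s|^{p-1}}\right)\cdot\frac{|s|^{p-1}}{?}
\]
--- more precisely, the key identity is $\frac{d}{ds}\big(g(x,s)|s|^{1-p}\big)=|s|^{1-p}g_s(x,s)-(p-1)|s|^{-p-1}s\,g(x,s)=|s|^{-p-1}\big(s^2 g_s(x,s)-(p-1)s g(x,s)\big)/s$ (sorting signs per sign of $s$), which by $(g3)$ is strictly positive; hence $s^2 g_s(x,s)-(p-1)sg(x,s)>0$, i.e.\ $sh_s(x,s)>0$.

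\medskip
The main obstacle is purely bookkeeping: item~(iii) requires differentiating the monotone ratio in $(g3)$ and keeping the signs straight separately for $s>0$ and $s<0$ (since $|s|^{p-1}$ is not differentiable at $0$, but we only need $s\ne0$), and matching the resulting expression with $sh_s$. I would present the computation of $\frac{d}{ds}(g(x,s)/|s|^{p-1})$ cleanly, observe it equals $|s|^{-(p-1)}\big(g_s(x,s)-(p-1)g(x,s)/s\big)$ for $s\ne0$, multiply through by $|s|^{p-1}s^2>0$, and read off $s^2g_s-(p-1)sg>0$, which is exactly $p\,s\,h_s>0$. Everything else is a one- or two-line estimate from $(g1)$ or the Ambrosetti--Rabinowitz integration in $(g2)$.
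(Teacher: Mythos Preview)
Your proposal is correct and follows the same route as the paper: (i) by integrating the bound in $(g1)$, (ii) by the standard Ambrosetti--Rabinowitz integration of $G'/G\ge\theta/s$, and (iii) by differentiating the monotone ratio in $(g3)$. The paper itself is far terser --- it dispatches (i) and (iii) in one line each --- so you have actually supplied more detail than the original.

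Two small cautions. In (i), integrating $|g_s|\le B|s|^{q-2}$ gives $|g|\le\frac{1}{q-1}B|s|^{q-1}$, and since $q$ may lie in $(1,2)$ the factor $\frac{1}{q-1}$ cannot literally be ``absorbed'' into $B$; this is harmless for every later use, but the lemma as stated carries a slightly optimistic constant. In (iii), you pass from ``$s\mapsto g(x,s)/|s|^{p-1}$ strictly increasing'' to ``its derivative is strictly positive''; a strictly increasing $C^1$ function can have isolated zeros of its derivative, so the pointwise conclusion $sh_s(x,s)>0$ does not follow from $(g3)$ alone without a further hypothesis. The paper glosses over this too, and what is actually used downstream (e.g.\ in the minimization argument) is only the strict monotonicity of $s\mapsto h(x,s)$ on each half-line, which \emph{does} follow directly from $(g3)$ via the identity $h(x,s)=\frac{1}{p}|s|^p\psi(s)-\int_0^s|t|^{p-1}\psi(t)\,\mathrm{d}t$ with $\psi(s)=g(x,s)/|s|^{p-1}$. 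Your final clean computation of $\frac{d}{ds}\big(g/|s|^{p-1}\big)=|s|^{-(p-1)}\big(g_s-(p-1)g/s\big)$ and the identification $p\,s\,h_s=s^2g_s-(p-1)sg$ is exactly the right bookkeeping; just be aware that strict positivity at every point is slightly more than $(g3)$ guarantees.
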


\begin{proof}
(i) follows immediately from $(g1)$. 
To prove (ii), first note that, for $s>R$,
\[
\frac{G'(x,s)}{G(x,s)}\geqs\frac{\theta}{s} \implies 
G(x,s)\geqs R^\theta G(x,R) s^\theta=R^\theta G(x,R)|s|^\theta,
\]
whereas, for $s<-R$,
\begin{align*}
\frac{G'(x,s)}{G(x,s)}\leqs\frac{\theta}{s}\ &\implies 
\int_s^{-R}\frac{G'(x,t)}{G(x,t)}\diff t\leqs \theta\int_s^{-R}\frac{\dif t}{t}
\implies \ln\frac{G(x, -R)}{G(x,s)} \leqs \ln\Big(\frac{-s}{R}\Big)^{-\theta}\\
& \implies G(x,s)\geqs R^{-\theta}G(x,-R)(-s)^{\theta}=R^{-\theta}G(x,-R)|s|^{\theta}.
\end{align*}
Therefore, $G(x,s)\geqs C(x)|s|^\theta$ for almost all $x\in\R^N$ and for all $|s|\geqs R$,
where $C(x):= \min\{R^\theta G(x,R),R^{-\theta}G(x,-R)\}$. Then $(g2)$ implies
that 
\[
|g(x,s)|\geqs \theta C(x)|s|^{\theta-1}, \quad\text{a.e.} \ x\in\R^N, \ |s|\geqs R,
\] 
with $C(x):= \min\{R^\theta G(x,R),R^{-\theta}G(x,-R)\}$, from which the limits in (ii) follow.
Finally, (iii) follows from $(g2)$ and $(g3)$.
\end{proof}

Let us now describe the variational setting we shall use to obtain critical points of the functional
$S_\lam$ defined in \eqref{functional}. By Lemma~\ref{derivative.lem}, 
$S_\lam \in C^1(W_A,\real)$
and, recalling the definition of $\Ve\cdot\Ve_\lam$ in \eqref{normlambda}, 
for all $\lam<\lam_A$ we define
\begin{align*}
J_{\lambda}(u) &= \langle S'_{\lambda}(u),u \rangle =
\Ve u\Ve_\lam^p- \int_{\R^N}g(x,u)u \diff x,\\
N_{\lambda} &=\bigl\{ u\in W_A\setminus\{0\}:J_{\lambda}(u)=0 \bigr\},\\
M_{\lambda} &=\bigl\{ u\in W_A:u^{\pm}\in N_{\lambda} \bigr\}\subset N_{\lambda}.
\end{align*}
The sets $N_\lam$ and $M_\lam$ are respectively known as the {\em Nehari manifold} 
and the {\em nodal Nehari set}. 
Clearly, $N_\lam$ contains all non-trivial solutions of \eqref{problem} while
$M_\lam$ contains all sign-changing solutions of \eqref{problem}.

For $u\in N_{\lambda}$,
it follows from Lemma~\ref{consequences}~(i), Proposition~\ref{sobolev.prop}
and Lemma~\ref{normequ.lem} that
\[
\|u\|_{\lambda}^p=\int_{\R^N}g(x,u)u\diff x \leqs C\int_{\R^N}B(x)|u|^q\diff x \leqs C\|u\|_{W_A}^q
\leqs C_{\lambda}\|u\|_{\lambda}^q.
\] 
Hence, letting $\delta_{\lambda}:=C_{\lambda}^{-1/(q-p)}$, we have
\begin{equation}\label{Nbzero}
\|u\|_{\lambda}\geqs\delta_{\lambda}>0, \quad u\in N_{\lambda}.
\end{equation}

Observing that
\[
J_{\lambda}(u)=0 \iff \|u\|^p_{\lambda}=\int_{\R^N}g(x,u)u\diff x,
\] 
it follows from Lemma~\ref{consequences}~(iii) that
\begin{equation}\label{bound}
S_{\lambda}(u)=\int_{\R^N}\frac1p g(x,u)u - G(x,u) \diff x
=\int_{\R^N}h(x,u) \diff x>0, \quad u\in N_{\lambda}.
\end{equation}
Therefore,\footnote{That $M_\lambda\neq\emptyset$ is easily seen from step 2 in the proof
of Proposition~\ref{min.lem}.}
\begin{equation}\label{min}
m_{\lambda}:=\inf_{M_{\lambda}}S_{\lambda}\geqs\inf_{N_{\lambda}}S_{\lambda}\geqs 0.
\end{equation}

We will now show that the Nehari manifold is diffeomorphic to the unit sphere in 
$(W_A,\Ve\cdot\Ve_\lam)$. Firstly,
similar arguments to the proof of Lemma~\ref{derivative.lem} show that 
$J_\lam\in C^1(W_A,\real)$,
and it follows from $(g3)$ that $\la J'_\lam(u),u\ra<0$ for all $u\in N_\lam$. Therefore,
by the submersion theorem, $N_\lam$ is a $C^1$ manifold of codimension 1 in $W_A$,
such that the tangent space $T_u N_\lam$ is transversal to $\real_+ u$, for all $u\in N_\lam$.

\begin{lemma}\label{proj.lem}
For any fixed $u\in W_A\setminus\{0\}$, there exists a unique $t=t_{\lambda}(u)>0$ such that 
$t_\lam(u)u\in N_\lam$. Furthermore the map $u\mapsto t_{\lambda}(u)u$ is a $C^1$ diffeomorphism
from $\{u\in W_A: \Ve u\Ve_\lam=1\}$ onto $N_\lam$, with inverse $u\mapsto u/\Ve u\Ve_\lam$.
Moreover, for any $u\in W_A\setminus\{0\}$, 
we have 
\begin{equation}\label{location}
t_\lam(u)<1 \  \text{if} \  J_\lam(u)<0 \quad\text{and}\quad\, t_\lam(u)>1 \  \text{if} \  J_\lam(u)>0.
\end{equation}
Finally, $S_{\lambda}(tu)$ is increasing for $t\in(0,t_\lam(u))$ and decreasing for 
$t\in(t_\lam(u),\infty)$,
with
\begin{equation}\label{max}
S_{\lambda}(t_{\lambda}(u)u)=\max_{t>0}S_{\lambda}(tu), \quad\text{for all} \ 
u\in W_A\setminus\{0\}.
\end{equation}
\end{lemma}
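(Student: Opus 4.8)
The plan is to study the fibering map $\gamma_u(t) := S_\lambda(tu)$ for fixed $u \in W_A \setminus \{0\}$ and $t > 0$. Using the $p$-homogeneity of $\Ve\cdot\Ve_\lambda^p$, I would write
\[
\gamma_u(t) = \frac{t^p}{p}\Ve u\Ve_\lambda^p - \intrn G(x,tu)\diff x,
\]
so that $\gamma_u \in C^1(\oi,\real)$ by Lemma~\ref{derivative.lem} and the chain rule, with derivative
\[
\gamma_u'(t) = t^{p-1}\Ve u\Ve_\lambda^p - \intrn g(x,tu)u\diff x = \frac{1}{t}\la S_\lambda'(tu),tu\ra = \frac{1}{t}J_\lambda(tu).
\]
Hence $tu \in N_\lambda$ if and only if $\gamma_u'(t) = 0$, which reduces the whole lemma to a careful analysis of the sign changes of $\gamma_u'$. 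The key auxiliary function is
\[
\psi_u(t) := \frac{\gamma_u'(t)}{t^{p-1}} = \Ve u\Ve_\lambda^p - \intrn \frac{g(x,tu)}{|tu|^{p-1}}\,|u|^p\,\diff x,
\]
defined wherever $u(x)\neq 0$ inside the integrand; I would show $\psi_u$ is \emph{strictly decreasing} on $\oi$. This is exactly where assumption $(g3)$ enters: for a.e.\ $x$ with $u(x) > 0$, the map $t \mapsto g(x,tu(x))/(tu(x))^{p-1}$ is strictly increasing in $t$, and similarly (with a sign bookkeeping using $u(x) < 0$ and the convention on $|s|^{p-2}s$) for $u(x) < 0$; in both cases $t \mapsto g(x,tu(x))|u(x)|^p/|tu(x)|^{p-1}$ is strictly increasing, so its integral is strictly increasing in $t$ and $\psi_u$ is strictly decreasing.

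Next I would pin down the limiting behaviour of $\psi_u$ at the endpoints. As $t \to 0^+$, Lemma~\ref{consequences}~(i) gives $g(x,tu)/|tu|^{p-1} \to 0$ uniformly in $x$, and since $\intrn B|u|^q < \infty$ the integrand is dominated; dominated convergence yields $\psi_u(t) \to \Ve u\Ve_\lambda^p > 0$. As $t \to \infty$, Lemma~\ref{consequences}~(ii) gives $g(x,tu(x))/|tu(x)|^{p-1} \to \infty$ pointwise a.e.\ on $\{u \neq 0\}$, so by Fatou's lemma $\intrn g(x,tu)|u|^p/|tu|^{p-1}\diff x \to \infty$ and hence $\psi_u(t) \to -\infty$. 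Combined with strict monotonicity and continuity, $\psi_u$ has a unique zero $t_\lambda(u) > 0$, which is the unique $t$ with $t u \in N_\lambda$. The sign of $\gamma_u'(t) = t^{p-1}\psi_u(t)$ is then positive on $(0,t_\lambda(u))$ and negative on $(t_\lambda(u),\infty)$, giving the claimed monotonicity of $S_\lambda(tu)$ and the max formula~\eqref{max}; and since $\psi_u(1)$ has the same sign as $\gamma_u'(1) = J_\lambda(u)$, monotonicity of $\psi_u$ forces $t_\lambda(u) > 1$ when $J_\lambda(u) > 0$ and $t_\lambda(u) < 1$ when $J_\lambda(u) < 0$, which is~\eqref{location}.

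It remains to establish the diffeomorphism statement. The map $\Phi: u \mapsto t_\lambda(u)u$ from the unit sphere $\calS := \{u : \Ve u\Ve_\lambda = 1\}$ to $N_\lambda$ is a bijection: surjectivity is immediate since every $v \in N_\lambda$ equals $\Phi(v/\Ve v\Ve_\lambda)$ by uniqueness (note $t_\lambda(v/\Ve v\Ve_\lambda) = \Ve v\Ve_\lambda$), and injectivity together with the explicit inverse $v \mapsto v/\Ve v\Ve_\lambda$ follows from the same uniqueness. For the $C^1$ regularity, I would apply the implicit function theorem to $F(t,u) := J_\lambda(tu)$ near a point $(t_\lambda(u),u)$ with $u \in \calS$: $F$ is $C^1$ since $J_\lambda \in C^1(W_A,\real)$, and $\partial_t F(t_\lambda(u),u) = t_\lambda(u)^{p-1}\gamma_u''(t_\lambda(u))\cdot$(something) — more directly, $\partial_t F(t,u)|_{t=t_\lambda(u)} = \frac{1}{t_\lambda(u)}\la J_\lambda'(t_\lambda(u)u),t_\lambda(u)u\ra < 0$ by the transversality fact $\la J_\lambda'(w),w\ra < 0$ for $w \in N_\lambda$ noted just before the lemma. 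Hence $u \mapsto t_\lambda(u)$ is $C^1$ on $\calS$, and so is $\Phi$; its inverse being the (smooth) radial projection, $\Phi$ is a $C^1$ diffeomorphism. The main obstacle is the strict monotonicity of $\psi_u$ — getting the sign bookkeeping in $(g3)$ right across $u(x)>0$ and $u(x)<0$ with the convention $|s|^{p-2}s$, and justifying that strict pointwise monotonicity of the integrand passes to strict monotonicity of the integral (which is fine as long as $u \not\equiv 0$, since then $\{u \neq 0\}$ has positive measure); the endpoint limits and the IFT argument are routine once that is in place.
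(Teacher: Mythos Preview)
Your approach is essentially identical to the paper's: both introduce the auxiliary function $t^{-p}J_\lambda(tu)$ (the paper calls it $\varphi_u$), use $(g3)$ for strict monotonicity, Lemma~\ref{consequences}(i)--(ii) for the endpoint behaviour, and the implicit function theorem together with the transversality $\langle J_\lambda'(w),w\rangle<0$ on $N_\lambda$ for the diffeomorphism. One small correction: your displayed formula for $\psi_u$ should carry the factor $u|u|^{p-1}$ rather than $|u|^p$ in the integrand --- with $|u|^p$ the integrand is \emph{decreasing} in $t$ wherever $u(x)<0$ (since $t\mapsto tu(x)$ is then decreasing and $(g3)$ gives an increasing map in $s$), so the claimed strict increase ``in both cases'' fails as written; replacing $|u|^p$ by $u|u|^{p-1}=\mathrm{sgn}(u)\,|u|^p$ flips the sign and restores strict increase, which is presumably what your ``sign bookkeeping'' remark was reaching for.
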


\begin{proof}
Define a $C^1$ function $\varphi:(0,\infty)\to\real$ by
 \[
 \varphi_u(t)=\frac{1}{t^p}J_{\lambda}(tu)=\|u\|_{\lambda}^p 
 -  \int_{\R^N}\frac{g(x,tu)}{t^{p-1}}u \diff x.
 \]
It follows from Lemma~\ref{consequences}~(i) and (ii) that
\[
\lim_{t\to 0^+}\varphi_u(t) = \|u\|_{\lambda}^p>0, \quad
\lim_{t\to +\infty}\varphi_u(t) = -\infty.
\]
Furthermore, by $(g3)$, $t\mapsto\varphi_u(t)$ is strictly decreasing on $(0,\infty)$,
from which the existence and uniqueness of $t_\lam(u)$ follow. 
The diffeomorphism statement is a consequence
of the implicit function theorem and the transversality of
$T_u N_\lam$ and $\real_+ u$.
\eqref{location} follows easily from the properties of $\varphi_u$, while
the behaviour of $S_\lam(tu), \ t>0$, follows from the calculation
\begin{align*}
\frac{\dif}{\dif t}S_{\lambda}(tu) &=\langle S'_{\lambda}(tu), u \rangle\\
 &=\int_{\R^N}t^{p-1}(|\nabla u|^p - \lambda A(x)|u|^p)\diff x - \int_{\R^N}g(x,tu)u\diff x\\
 &=t^{-1}\Bigl( \|tu\|_{\lambda}^p - \int_{\R^N}g(x,tu)tu\diff x\Bigr)=t^{-1}J_{\lambda}(tu).
\end{align*}
The lemma is proved.
\end{proof}

\begin{proposition}\label{min.lem}
The infimum $m_{\lambda}$ defined in \eqref{min} is achieved.
\end{proposition}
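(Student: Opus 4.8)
The plan is to run the standard minimization-on-a-constraint argument, the only subtlety being that the constraint set $M_\lambda$ is built from the two Nehari conditions $u^\pm\in N_\lambda$ simultaneously. Let $(u_n)\subset M_\lambda$ be a minimizing sequence, $S_\lambda(u_n)\to m_\lambda$. Since $u_n^\pm\in N_\lambda$, \eqref{bound} gives $S_\lambda(u_n)=\int_{\rn}h(x,u_n^+)+h(x,u_n^-)\diff x$, and by $(g2)$ one has the familiar inequality $h(x,s)=\frac1p g(x,s)s-G(x,s)\geqs(\frac1p-\frac1\theta)g(x,s)s\geqs 0$ for $|s|\geqs R$, with a controlled defect for $|s|\le R$; combining this with $J_\lambda(u_n^\pm)=0$, i.e. $\|u_n^\pm\|_\lambda^p=\int g(x,u_n^\pm)u_n^\pm$, yields $m_\lambda+o(1)\geqs(\frac1p-\frac1\theta)(\|u_n^+\|_\lambda^p+\|u_n^-\|_\lambda^p)-C$, so $(u_n^+)$ and $(u_n^-)$ are bounded in $W_A$, hence so is $(u_n)$. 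By reflexivity (Proposition~\ref{bspace.prop}) and by passing to a subsequence, $u_n\wto u$ in $W_A$; since $v\mapsto v^\pm$ is continuous for the weak topology on $W_A$ (it is continuous and bounded, cf.~\cite{sav}), $u_n^\pm\wto u^\pm$ as well.

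Next I would identify the weak limit. By the compactness in Lemma~\ref{compcont.lem}, along the subsequence $\int_\rn g(x,u_n^\pm)u_n^\pm\diff x\to\int_\rn g(x,u^\pm)u^\pm\diff x$ and $\int_\rn G(x,u_n^\pm)\diff x\to\int_\rn G(x,u^\pm)\diff x$. The crucial point is that $u^+\not\equiv 0$ and $u^-\not\equiv 0$: since $u_n^\pm\in N_\lambda$, \eqref{Nbzero} gives $\|u_n^\pm\|_\lambda\geqs\delta_\lambda>0$, and then $\int g(x,u_n^\pm)u_n^\pm=\|u_n^\pm\|_\lambda^p\geqs\delta_\lambda^p>0$ passes to the limit, forcing $\int g(x,u^\pm)u^\pm\geqs\delta_\lambda^p>0$, so in particular $u^\pm\not\equiv0$. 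Hence $u^\pm\in W_A\setminus\{0\}$ and Lemma~\ref{proj.lem} applies: let $t^\pm=t_\lambda(u^\pm)>0$ be the unique numbers with $t^\pm u^\pm\in N_\lambda$, so that $w:=t^+u^++t^-u^-\in M_\lambda$. By weak lower semicontinuity of $\|\cdot\|_\lambda^p$ (equivalently, of $u\mapsto\int|\nabla u|^p$ together with the compactness of $u\mapsto\int A|u|^p$ in Lemma~\ref{compcont.lem}(ii)) one has, for each sign, $J_\lambda(u^\pm)\leqs\liminf J_\lambda(u_n^\pm)=0$; by \eqref{location} this gives $t^\pm\leqs1$. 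Since $S_\lambda(t u^\pm)$ is maximized over $t>0$ at $t=t_\lambda(u^\pm)$ and is increasing before it \eqref{max}, and since $S_\lambda(t^\pm u^\pm)=\int h(x,t^\pm u^\pm)$ while $h(x,\cdot)$ is, sign-wise, monotone increasing in $|s|$ by Lemma~\ref{consequences}(iii), we get
\[
S_\lambda(w)=\int_\rn h(x,t^+u^+)+h(x,t^-u^-)\diff x\leqs\int_\rn h(x,u^+)+h(x,u^-)\diff x\leqs\liminf_{n\to\infty}\int_\rn h(x,u_n^+)+h(x,u_n^-)\diff x=m_\lambda,
\]
using weak lower semicontinuity of $v\mapsto\|v\|_\lambda^p-\int g(x,v)v/p+\dots$; more simply, $S_\lambda(u^+)+S_\lambda(u^-)\leqs\liminf S_\lambda(u_n)=m_\lambda$ from the compactness of the nonlinear terms and l.s.c.\ of the homogeneous part, and $\int h(x,u^\pm)\geqs\int h(x,t^\pm u^\pm)=S_\lambda(t^\pm u^\pm)$. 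Since $w\in M_\lambda$ we also have $S_\lambda(w)\geqs m_\lambda$, whence $S_\lambda(w)=m_\lambda$ and the infimum is attained at $w$.

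The main obstacle, and the step that needs the most care, is the inequality $S_\lambda(w)\leqs m_\lambda$: it is not enough to know $u_n\wto u$ and pass to the limit, because $u$ itself need not lie in $M_\lambda$ (the constraints $J_\lambda(u^\pm)=0$ may be lost in the limit, only $J_\lambda(u^\pm)\leqs0$ survives). This is exactly why one must first project back onto $M_\lambda$ via $t_\lambda(u^\pm)\leqs1$ and then exploit that $t\mapsto S_\lambda(tu^\pm)$ is increasing on $(0,t_\lambda(u^\pm))$ — equivalently, that $\int h(x,\cdot)$ is monotone in the amplitude — to ensure the projection does not raise the energy. A secondary point worth checking is that the weak limit genuinely changes sign, which I would nail down using the uniform lower bound \eqref{Nbzero} on $\|u_n^\pm\|_\lambda$ combined with the strong convergence of $\int g(x,u_n^\pm)u_n^\pm$ from Lemma~\ref{compcont.lem}(i); without this, $u$ could collapse to a function of one sign (or to $0$) and the argument would break down. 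Finally, I would remark that the same scheme, run on $N_\lambda$ instead of $M_\lambda$ and starting from a nonnegative (resp.\ nonpositive) minimizing sequence, yields the minimizers giving the positive and negative solutions $u_1,u_2$, as announced before the theorem.
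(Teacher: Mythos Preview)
There is a genuine gap in your boundedness argument. Writing $S_\lambda(u_n)=S_\lambda(u_n)-\tfrac1\theta J_\lambda(u_n)=(\tfrac1p-\tfrac1\theta)\|u_n\|_\lambda^p+\int_{\rn}\bigl[\tfrac1\theta g(x,u_n)u_n-G(x,u_n)\bigr]\diff x$, you need the last integral bounded below by a fixed constant. The integrand is $\geqs 0$ on $\{|u_n|\geqs R\}$ by $(g2)$, but on $\{|u_n|<R\}$ the only pointwise lower bound available is $-C\,B(x)$, and under the stated hypotheses $B\in L^{p^*/(p^*-q)}(\rn)\cap L^\infty(\rn)$ need \emph{not} lie in $L^1(\rn)$ (note $p^*/(p^*-q)>N/p>1$). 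Since $\{|u_n|<R\}$ typically has infinite measure, your ``controlled defect'' is not bounded by a constant; estimating instead via the embedding $W_A\hookrightarrow L^q_B$ gives at best $\int_{|u_n|<R}B|u_n|^q\diff x\leqs C\|u_n\|_\lambda^q$, which is superlinear in $\|u_n\|_\lambda^p$ (as $q>p$) and cannot be absorbed into the leading term. The paper proves boundedness by a route that avoids this difficulty: supposing $\|u_n\|_\lambda\to\infty$, normalize $v_n=c\,u_n/\|u_n\|_\lambda$ so that $\tfrac1p\|v_n\|_\lambda^p=m_\lambda+1$, and pass to a weak limit $v$. If $v\not\equiv 0$, Lemma~\ref{consequences}(ii) together with Egorov's theorem forces $\int G(x,u_n)/\|u_n\|_\lambda^p\to\infty$, contradicting the fact that this ratio equals $\tfrac1p-S_\lambda(u_n)/\|u_n\|_\lambda^p\to\tfrac1p$; if $v\equiv 0$, compactness of $\int G$ gives $S_\lambda(v_n)\to m_\lambda+1$, contradicting $S_\lambda(v_n)\leqs S_\lambda(u_n)\to m_\lambda$ from \eqref{max}.

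Your Step 2, on the other hand, is correct and is essentially the paper's argument: show $u^\pm\not\equiv 0$ via \eqref{Nbzero} and Lemma~\ref{compcont.lem}(i), obtain $J_\lambda(u^\pm)\leqs 0$ and hence $t^\pm\leqs 1$, and compare energies using the monotonicity of $s\mapsto h(x,s)$ in $|s|$ from Lemma~\ref{consequences}(iii). The paper phrases the last step by contradiction, deducing that in fact $t^\pm=1$ so that $u$ itself lies in $M_\lambda$, but your constructive version exhibiting the minimizer $w=t^+u^++t^-u^-$ is equivalent.
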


\begin{proof}
In the course of this proof, we will take the liberty of passing to subsequences
when necessary, without mentioning it explicitly. The proof proceeds in two steps.

\smallskip
\noindent
{\em 1.~Boundedness of a minimizing sequence.}
Consider $(u_n)\subset M_{\lambda}$ such that $S_{\lambda}(u_n)\to m_{\lambda}$,
and suppose by contradiction that $\Ve u_n\Ve_\lam\to\infty$ as $n\to\infty$.
Now let
\[
v_n=p[m_\lam+1]^{1/p}\frac{u_n}{\Ve u_n\Ve_\lam}. 
\]
Since the sequence $(v_n)$
is bounded, we can suppose that there exists $v\in W_A$ such that 
$v_n\wto v$ weakly in $W_A$. By Lemma~\ref{compcont.lem}, we have 
\[
S(v_n)\to m_\lam+1 - \intrn G(x,v) \diff x.
\]
On the other hand, since $u_n\in N_\lam$, it follows
from \eqref{max} that $S(v_n)\leqs S(u_n)$. We shall thus reach a contradiction by showing
that $v\equiv0$. If it is not the case, there exists a set $\Omega\subset\rn$ with 
positive measure, and a number $\delta>0$, such that $\essinf_\Omega |v|\geqs\delta$.
Invoking Proposition~\ref{sobolev.prop} and Egorov's theorem, we can suppose that
\[
\essinf_\Omega |v_n|\geqs\frac{\delta}{2}>0, \quad n\geqs n_0,
\]
for some large enough $n_0\in\nat$. Since $G(x,0)\equiv0$ and the supports of
$v_n^+$ and $v_n^-$ are disjoint, it follows from Lemma~\ref{consequences}~(iii) that
\begin{align*}
\intrn\frac{G(x,u_n)}{\Ve u_n\Ve_\lam^p}\diff x 
& 
\geqs 
\int_\Omega\frac{G\big(x,\frac{\Ve u_n\Ve_\lam}{p[m_\lam+1]^{1/p}}v_n\big)}
{\Ve u_n\Ve_\lam^p}\diff x \\
&
= \int_\Omega\frac{G\big(x,\frac{\Ve u_n\Ve_\lam}{p[m_\lam+1]^{1/p}}v_n^+\big)
+G\big(x,\frac{\Ve u_n\Ve_\lam}{p[m_\lam+1]^{1/p}}v_n^-\big)}
{\Ve u_n\Ve_\lam^p}\diff x \\
&
\geqs 
\int_\Omega\frac{G\big(x,\frac{\Ve u_n\Ve_\lam}{p[m_\lam+1]^{1/p}}\frac{\delta}{2}\big)
+G\big(x,\frac{\Ve u_n\Ve_\lam}{p[m_\lam+1]^{1/p}}(-\frac{\delta}{2})\big)}
{\Ve u_n\Ve_\lam^p}\diff x, \quad n\geqs n_0.
\end{align*}
Then Lemma~\ref{consequences}~(ii) yields
\[
\intrn\frac{G(x,u_n)}{\Ve u_n\Ve_\lam^p}\diff x\to\infty \quad\text{as} \  n\to\infty.
\]
However, on the other hand,
\[
\intrn\frac{G(x,u_n)}{\Ve u_n\Ve_\lam^p}\diff x = 
\frac{\frac1p\Ve u_n\Ve_\lam^p - S_\lam(u_n)}{\Ve u_n\Ve_\lam^p} 
\to \frac1p \quad\text{as} \  n\to\infty,
\]
which gives the desired contradiction. Therefore, any minimizing sequence $(u_n)$
is indeed bounded.

\smallskip
\noindent
{\em 2.~Existence of a minimizer.}
Let $(u_n)\subset M_{\lambda}$ such that $S_{\lambda}(u_n)\to m_{\lambda}$. 
Since $(u_n)$ is bounded in $W_A$, there exists $u\in W_A$ 
such that $u_n\wto u$ and $u_n^{\pm}\wto u^{\pm}$ weakly in $W_A$ as $n\to\infty$. 
It immediately follows from the weak lower semicontinuity of $u\mapsto \Vert\nabla u\Vert^p_p$,
and from Lemma~\ref{compcont.lem}, that
\[
S_\lam(u)\leqs \liminf_{n\to\infty} S_\lam(u_n)=m_\lam.
\]
Hence we need only prove that $u^{\pm}\in N_{\lambda}$.
We first observe that $u^{\pm}\not\equiv 0$.
Indeed, by Lemma~\ref{compcont.lem} and \eqref{Nbzero},
\begin{equation}\label{norms}
\int_{\R^N}g(x,u^{\pm})u^{\pm}\diff x = \lim_{n\to\infty}\int_{\R^N}g(x,u_n^{\pm})u_n^{\pm}\diff x 
= \lim_{n\to\infty}\|u_n^{\pm}\|_{\lambda}^p\geqs\delta_{\lambda}^p>0.
\end{equation}
Invoking again the weak lower semicontinuity of $u\mapsto \Vert\nabla u\Vert^p_p$
and Lemma~\ref{compcont.lem}, it follows from \eqref{norms} that
\[
J_\lam(u^\pm)=\Ve u^\pm\Ve_\lam^p - \lim_{n\to\infty}\|u_n^{\pm}\|_{\lambda}^p\leqs0.
\]
Suppose by contradiction that
\[
\|u^+\|_{\lambda}^p<\liminf_{n\to\infty}\|u_n^+\|_{\lambda}^p.
\]
Then $t^+:=t_{\lambda}(u^+)<1$ and $t^-:=t_{\lambda}(u^-)\leqs 1$,
$t^+u^++t^-u^-\in M_{\lambda}\subset N_{\lambda}$, and so
\[
S_{\lambda}(t^+u^++t^-u^-) = \int_{\R^N}h(x,t^+u^++t^-u^-)\diff x
\]
by \eqref{bound}.
Since $h(x,0)\equiv 0$ and the supports of $u^+$ and $u^-$ are disjoint, 
it follows from Lemma~\ref{consequences}~(iii) that
\begin{align*}
S_{\lambda}(t^+u^++t^-u^-)
&=\int_{\R^N} h(x,t^+u^++t^-u^-)\diff x \\
&= \int_{\R^N}h(x,t^+u^+)\diff x + \int_{\R^N}h(x,t^-u^-)\diff x\\
&<\int_{\R^N} h(x, u^+)\diff x + \int_{\R^N} h(x, u^-)\diff x = \int_{\R^N} h(x, u^++u^-)\diff x \\
&= \int_{\R^N} h(x,u)\diff x =\lim_{n\to\infty}\int_{\R^N} h(x,u_n)\diff x
=\lim_{n\to\infty}S_{\lambda}(u_n)=m_{\lambda}.
\end{align*}
This contradiction concludes the proof.
\end{proof}

\begin{remark}
\rm
If $\lam<0$ then $\Ve\cdot\Ve_\lam$ is a norm, $(W_A,\Ve\cdot\Ve_\lam)$ is 
uniformly convex, and the proof shows that $u_n\to u$ in $W_A$ (up to a subsequence).
\end{remark}

We are now in a position to complete the 

\medskip
\noindent{\it Proof of Theorem~\ref{existence.thm}.}
Proposition~\ref{min.lem} yields an element $u_3\in W_A$ that minimizes $S_\lam$ on the nodal
Nehari set $M_\lam$. To conclude the proof of Theorem~\ref{existence.thm}, we will now show
that $S_\lam'(u_3)=0$. The existence of the critical points $u_1$ and $u_2$ (with $u_1>0$ a.e. 
and $u_2<0$ a.e.) follows similarly, by minimizing $S_\lam$ over $N_\lam^\pm$ instead of $M_\lam$, 
where
\[
N_\lam^\pm=\{u\in N_\lam: u^\mp=0\}.
\] 
Since $M_\lam$ is not a submanifold of $W_A$, we cannot use the Lagrange multiplier
theorem to infer that the minimizer $u_3$ is indeed a critical point of $S_\lam$.
To overcome this difficulty, we appeal to a theorem of 
Miranda\footnote{which is essentially a version of Brouwer's fixed point theorem}, which was
first established in \cite{mir}. 
For the reader's convenience, we recall here the two-dimensional version of this result. An
elegant proof can be found in \cite{vra}.

\begin{lemma}\label{mir.lem}
Let $L>0$, $R=(-L,L)^2\subset\real^2$ and consider a continuous function 
$F=(F_1,F_2):\overline{R}\to\real^2$ which satisfies $F(t,s)\neq0$ for all $(s,t)\in\partial R$, and
the following conditions on the boundary $\partial R:$
\[
F_1(-L,t)\ge 0, \quad F_1(L,t)\le 0, \quad F_2(s,-L)\ge 0, \quad F_2(s,L)\le 0.
\]
In other words, the vector field $F$, evaluated on the boundary $\partial R$, 
always points towards the interior of $R$. Then there exists $(s_0,t_0)\in R$ such that
$F(s_0,t_0)=0$.
\end{lemma}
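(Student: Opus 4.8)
The plan is to deduce Lemma~\ref{mir.lem} from Brouwer's fixed point theorem. Write $\overline{R}=[-L,L]^2$ and let $\rho:\real^2\to\overline{R}$ denote the coordinatewise truncation, $\rho(a,b)=(\rho_0(a),\rho_0(b))$ with $\rho_0(x)=\max\{-L,\min\{L,x\}\}$; this map is continuous and restricts to the identity on $\overline{R}$. I would then introduce the map $\Phi:\overline{R}\to\overline{R}$,
\[
\Phi(s,t)=\rho\bigl((s,t)+F(s,t)\bigr),
\]
which is continuous and sends the compact convex set $\overline{R}$ into itself; Brouwer's theorem therefore supplies a fixed point $p^*=(s^*,t^*)\in\overline{R}$ with $\Phi(p^*)=p^*$.

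The next step is to show that $F(p^*)=0$. Suppose not. If $F_1(p^*)\neq0$, the first component of the fixed-point relation reads $\rho_0\bigl(s^*+F_1(p^*)\bigr)=s^*$; when $F_1(p^*)>0$ the argument $s^*+F_1(p^*)$ is strictly larger than $s^*$, and since $\rho_0$ acts as the identity on $[-L,L]$ and caps larger values at $L$, the only way it can return the smaller value $s^*$ is to have $s^*=L$ — but then $p^*$ lies on the right face and the hypothesis $F_1(L,t^*)\le0$ is contradicted; the case $F_1(p^*)<0$ gives $s^*=-L$ in the same way and contradicts $F_1(-L,t^*)\ge0$. If instead $F_1(p^*)=0$ but $F_2(p^*)\neq0$, the identical reasoning applied to the second component forces $t^*=\pm L$ and contradicts the corresponding inequality $F_2(s^*,L)\le0$ or $F_2(s^*,-L)\ge0$. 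Hence $F(p^*)=0$, and since $F$ does not vanish on $\partial R$ by hypothesis, the zero $p^*$ must lie in the open rectangle $R$, which is the assertion.

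I do not anticipate a genuine obstacle here; the only point requiring some care is the choice of the auxiliary self-map. The naive candidate $(s,t)\mapsto(s,t)+F(s,t)$ need not preserve $\overline{R}$ near the corners — for any fixed scaling of $F$, a point on one face sufficiently close to a corner can be carried out through the adjacent face — which is precisely why the truncation $\rho$ is inserted: with $\rho$ present the self-map property is automatic, and the boundary inequalities are used only in the short argument above identifying the fixed point as a zero of $F$. (An alternative, slightly less elementary, route would be to assume $F\neq0$ on all of $\overline{R}$ and to check that the four boundary inequalities force the winding number of $F/|F|$ along $\partial R$ to equal $\pm1$, contradicting the fact that $F/|F|$ then extends continuously over $\overline{R}$; the Brouwer route avoids this computation.)
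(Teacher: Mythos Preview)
Your argument is correct: the truncation map $\rho$ makes $\Phi$ a self-map of $\overline{R}$, Brouwer gives a fixed point, and your case analysis cleanly shows that any fixed point must be a zero of $F$ lying in the open square. The only minor remark is that the paper does not actually prove this lemma --- it merely recalls the statement and refers to Miranda~\cite{mir} and Vrahatis~\cite{vra} for a proof, noting in a footnote that the result ``is essentially a version of Brouwer's fixed point theorem''. Your proof is thus entirely in the spirit of what the paper indicates, and supplies the details the authors chose to omit.
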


We apply this lemma in the following way. Suppose by contradiction that $S_\lam'(u_3)\neq0$.
Then there is $\ffi\in W_A$ such that $\la S_\lam'(u_3), \ffi \ra=-2$ and so, 
by continuity of $S_\lam'$, there is an $\eps>0$ such that
\begin{equation}\label{neg}
\la S_\lam'(tu_3^+ + su_3^-+r\ffi),\ffi \ra<-1
\end{equation}
for all $r\in(0,\eps]$ and all $(s,t)\in \overline{R}$, where $R=(1-\eps,1+\eps)^2\subset\real^2$. 
Now consider a continuous function $\eta:\overline{R}\to[0,\eps]$ such that 
$\eta(1,1)=\eps, \ \eta(\partial R)=0$, and $\eta\neq0$ on $R$. 
We define $F:\overline{R}\to\real^2$ by
\[
F(s,t)=\Big(J_\lam\big((tu_3^+ + su_3^-+\eta(t,s)\ffi)^-\big),
J_\lam\big((tu_3^+ + su_3^-+\eta(t,s)\ffi)^+\big)\Big).
\]
First of all, it is clear that $F$ is continuous. Next, for $s=1-\eps$ and
$t\in[1-\eps,1+\eps]$, 
using the function $\ffi_u(t)$ introduced in the proof of Lemma~\ref{proj.lem} we have
\[
\frac{F_1(1-\eps,t)}{(1-\eps)^p}=\frac{J_\lam((1-\eps)u_3^-) }{(1-\eps)^p}=
\ffi_{u_3^-}(1-\eps)>\ffi_{u_3^-}(1)=0
\]
since $u_3^-\in N_\lam$. 
Similar arguments show that $F_1(1+\eps,t)<0$ and $\mp F_2(s,1\pm\eps)>0$. Hence, the
hypotheses of Lemma~\ref{mir.lem} are satisfied and there exists $(s_0,t_0)\in R$ such that
$F(s_0,t_0)=0$. Remarking that $t_0u_3^+ + s_0u_3^-+\eta(t_0,s_0)\ffi\neq0$ 
by \eqref{neg}, it follows that $t_0u_3^+ + s_0u_3^-+\eta(t_0,s_0)\ffi\in M_\lam$.
We will reach a contradiction by showing that 
$S_\lam(t_0u_3^+ + s_0u_3^-+\eta(t_0,s_0)\ffi)<m_\lam$. By \eqref{neg} we have
\begin{align*}
S_\lam(t_0u_3^+ + s_0u_3^-+\eta(t_0,s_0)\ffi)
	&= S_\lam(t_0u_3^+ + s_0u_3^-) \\
	&\qquad\qquad	+ \int_0^{\eta(t_0,s_0)}
	\la S_\lam'(tu_3^+ + su_3^-+r\ffi),\ffi \ra \,\mathrm{d}r  \\
	&< S_\lam(t_0u_3^+ + s_0u_3^-)  - \eta(t_0,s_0).
\end{align*}
If $(s_0,t_0)=(1,1)$ then $S_\lam(t_0u_3^+ + s_0u_3^-+\eta(t_0,s_0)\ffi)<m_\lam-\eps$ 
and we are done. So suppose that $(s_0,t_0)\neq(1,1)$. 
Since $u_3^\pm\in N_\lam$ it follows from \eqref{max} that
\begin{align*}
S_\lam(t_0u_3^+ + s_0u_3^-+\eta(t_0,s_0)\ffi)
	&< S_\lam(t_0u_3^+ + s_0u_3^-)  - \eta(t_0,s_0)\\
	&=S_\lam(t_0u_3^+)+S_\lam(s_0u_3^-) - \eta(t_0,s_0)\\
	&\le S_\lam(u_3^+)+S_\lam(u_3^-) - \eta(t_0,s_0)\\
	&=S_\lam(u_3)- \eta(t_0,s_0)<m_\lam,
\end{align*}
yielding the desired contradiction. This completes the proof of Theorem~\ref{existence.thm}.
\hfill$\Box$


\section{Regularity and conclusion of the proof}\label{reg.sec}
We will now prove a regularity result for the solutions given by Theorem~\ref{existence.thm}. 
We say that $u \in C^{1,\alpha}_\mathrm{loc}(\rn)$ if
for any compact $K\subset\rn$ there exists $\alpha=\alpha(K)$ such that $u \in C^{1,\alpha}(K)$.
Once $C^{1,\alpha}$ regularity is proved, we will prove the remaining statements of 
Theorem~\ref{main.thm}, about the nodal properties of the solutions.

\begin{proposition}
Suppose that the hypotheses $(A1)$, $(A2)$ and $(g1)$ hold.
Then any weak solution $u$ of \eqref{problem} satisfies $u\in C^{1,\alpha}_\mathrm{loc}(\rn)$. 
\end{proposition}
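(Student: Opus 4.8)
The plan is to bootstrap regularity from the weak formulation in two stages: first prove local boundedness, $u\in L^\infty_{\mathrm{loc}}(\rn)$, and then quote the interior $C^{1,\alpha}$ theory for the $p$-Laplacian with a bounded right-hand side. As a preliminary observation, any weak solution $u\in W_A(\rn)$ lies in $W^{1,p}_{\mathrm{loc}}(\rn)$, since $\Ve\nabla u\Ve_p\le\Ve u\Ve_{W_A}$, and in $L^{p^*}(\rn)\subset L^{p^*}_{\mathrm{loc}}(\rn)$ by Remark~\ref{classicsobolev.rem}; moreover $W^{1,p}_0(B)\subset W^{1,p}(\rn)\subset W_A(\rn)$ for every ball $B$, so compactly supported $W^{1,p}$ functions are admissible test functions in the weak formulation. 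Writing the equation as $-\Delta_p u=f$ with $f(x):=\lam A(x)|u(x)|^{p-2}u(x)+g(x,u(x))$, hypotheses $(A1)$ and $(g1)$ give $A\in L^\infty(\rn)$ and $|g(x,s)|\le\Ve B\Ve_\infty|s|^{q-1}$, whence the structural bound $|f(x)|\le C\bigl(1+|u(x)|^{q-1}\bigr)$ a.e., with $p<q<p^*$; in particular $f\in L^{p^*/(q-1)}_{\mathrm{loc}}(\rn)$.

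For the first stage, I would fix a ball $B_R\subset\rn$ and regard $u|_{B_R}\in W^{1,p}(B_R)\cap L^{p^*}(B_R)$ as a weak solution of $-\Delta_p u=f$ on $B_R$, the lower-order term having the strictly subcritical growth $q-1<p^*-1$. A Moser iteration --- testing with truncated powers $\eta^p\,\min(|u|,k)^{(\beta-1)p}u$ of $u$, applying the Sobolev inequality at each step, and letting $k\to\infty$ --- then yields $u\in L^\infty_{\mathrm{loc}}(B_R)$, the essential point being that the gap $q<p^*$ allows the iteration to be initialised (since $u\in L^{p^*}_{\mathrm{loc}}$) and closed. This is the classical local boundedness result for quasilinear equations with subcritical lower-order terms; covering $\rn$ by balls gives $u\in L^\infty_{\mathrm{loc}}(\rn)$.

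For the second stage, fix an arbitrary bounded open set $\Omega\subset\subset\rn$ and put $M:=\Ve u\Ve_{L^\infty(\Omega)}$. Then $|f(x)|\le C(1+M^{q-1})$ a.e. on $\Omega$, so $u\in W^{1,p}(\Omega)\cap L^\infty(\Omega)$ is a bounded weak solution of $-\Delta_p u=f$ on $\Omega$ with $f\in L^\infty(\Omega)$. The interior $C^{1,\alpha}$ regularity theory for degenerate and singular quasilinear elliptic equations (DiBenedetto, Tolksdorf, Lieberman) then provides, for every compact $K\subset\Omega$, an exponent $\alpha=\alpha(K,N,p)\in(0,1)$ with $u\in C^{1,\alpha}(K)$. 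Since $\Omega$ was an arbitrary relatively compact subset of $\rn$, this is exactly the assertion $u\in C^{1,\alpha}_{\mathrm{loc}}(\rn)$.

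I expect the first stage to be the only real obstacle. The a priori information is merely $u\in L^{p^*}$, while $g(x,u)$ may grow almost critically (of order $q-1$, with $q$ possibly close to $p^*$), so the Moser scheme must be arranged with some care: the truncation level, the exponents $\beta$, and the placement of the cut-off function all have to be chosen so that every iterate is finite and the resulting sequence of $L^{r}$-norms can be bootstrapped to an $L^\infty$ bound; the strict subcriticality $q<p^*$ is used precisely here. Once $u\in L^\infty_{\mathrm{loc}}$ is established, the promotion to $C^{1,\alpha}_{\mathrm{loc}}$ is a black-box application of the quasilinear regularity theory cited above.
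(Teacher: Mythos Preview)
Your proposal is correct and follows the same two-stage strategy as the paper: first establish $u\in L^\infty_{\mathrm{loc}}(\rn)$ via an iteration argument exploiting the strict subcriticality $q<p^*$, then invoke Tolksdorf's interior $C^{1,\alpha}$ theory once the right-hand side is locally bounded. The only notable difference is in the packaging of the first stage: the paper quotes Theorem~7.1 of Chapter~4 in Ladyzhenskaya--Ural'tseva (and explains how to bypass the boundary hypothesis $\esssup_{\partial K}|u|<\infty$ there by using test functions in $W^{1,p}(K)$ rather than $W^{1,p}_0(K)$, together with Theorem~5.2 in place of Theorem~5.1 of Chapter~2), whereas you outline the Moser iteration directly with compactly supported cut-offs; your route is self-contained and avoids that boundary workaround, while the paper's route also accommodates the integrability condition $(A2)$ on $A$ rather than relying solely on $A\in L^\infty$.
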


\begin{proof}
Assumption $(A1)$ implies that 
$u\in W^{1,p}_\mathrm{loc}(\rn)\subset L^{p^*}_\mathrm{loc}(\rn)$.
It then follows from \cite{tolk}
that $u\in C^{1,\alpha}_\mathrm{loc}(\rn)$, provided we know {\em a priori} that
$u\in L^\infty_\mathrm{loc}(\rn)$. 
For a fixed compact $K\subset\rn$,
we briefly explain how the results of \cite{lad} imply that 
$u\in L^\infty(K)$. Firstly, the integrability conditions on $A$ and $B$ given in hypotheses
$(A1)$, $(A2)$ and $(g1)$ precisely ensure that the assumptions (7.1) and (7.2) 
in \cite[Chap.~4,~Sec.~7]{lad} (together with conditions 1)--3) there) hold. 
Thus the idea is to apply Theorem~7.1 of \cite[Chap.~4,~Sec.~7]{lad}. In the present context,
the parameters $m$ and $q$ appearing there are given by $m=p$ and $q=p^*$.
We need only explain here why the conclusion of this theorem still holds without 
the hypothesis that
\begin{equation}\label{boundary}
\esssup_{\partial K} |u|<\infty. 
\end{equation}
This assumption is used in the proof of 
\cite[Theorem~7.1,~Chap.~4]{lad} in the two following instances. First, to derive
the estimate (7.3), the test function $\eta(x)=\max\{u(x)-k,0\}$ is used, where $k$ is a positive
parameter such that $k\ge \esssup_{\partial K} |u|$. This restriction is due to the definition
of a weak solution in \cite[Chap.~4]{lad}, requiring that $\eta\in W_0^{1,p}(K)$. Our definition
of a weak solution (see \eqref{weak}) allows us to merely consider $\eta\in W^{1,p}(K)$, and we
can derive the estimate (7.3) in the same way as in \cite{lad}. Finally, assumption \eqref{boundary}
is used to conclude the proof of Theorem~7.1 by
invoking Theorem~5.1 of \cite[Chap.~2]{lad}. It turns out that Theorem~5.2 of \cite[Chap.~2]{lad}
does the job as well, and does not require \eqref{boundary}. This completes the proof.
\end{proof}

We can now finish the 

\medskip
\noindent{\it Proof of Theorem~\ref{main.thm}.}
It only remains to show that $u_1>0$, $u_2<0$, and $u_3$ has exactly two nodal domains,
as defined in Section~\ref{tuning.sec}. The positivity of $u_1$ and the negativity of $u_2$
follow from the strong maximum principle, see e.g. Theorem~5 in \cite{va}. Regarding $u_3$,
we already know from the previous results that it has two nodal domains.
Suppose by contradiction that $u_3$ has (at least) 
three distinct nodal domains $\Omega_i\subset\rn, \ i=1,2,3$, and define
\[
v_i(x)=
\begin{cases}
u_3(x) &\text{if} \  x\in \Omega_i,\\
0 &\text{otherwise}.
\end{cases}
\]
Clearly $v_i\in W_A, \ i=1,2,3$, and
without loss of generality we can suppose that $v_1>0$ and $v_2<0$. 
Since $S_\lam'(u_3)=0$, it follows that $v_i\in N_\lam, \ i=1,2,3$,
$v_1+v_2\in M_\lam$, and so \eqref{bound} implies that
\[
m_\lam\le S_\lam(v_1+v_2)< S_\lam(u_3)=m_\lam.
\]
This contradiction concludes the proof.
\hfill$\Box$


\appendix

\section{Compactness}\label{compact.sec}

Since we were not able to find the exact result we need in the literature,
we now give a proof of the compactness of the embedding 
$W_A(\Omega) \subset L_B^r(\Omega)$ in Proposition~\ref{sobolev.prop}. We shall
make extensive use of the classic H\"older and interpolation inequalities, which hold
in the weighted spaces $L_B^q(\Omega)$, as in the usual case $B\equiv 1$.

\medskip
\noindent
{\em Proof of Proposition~\ref{sobolev.prop} (continued).}
We start by assuming that $\Omega\subset\rn$ is an open bounded domain with $C^1$ boundary.
We will first explain how the proof of the classic Rellich-Kondrachov theorem can be adapted to the
the present case. We follow the
proof of Brezis \cite[Th\'eor\`eme~IX.16]{Br}, which is based on a criterion of strong compactness
in $L^p$ spaces \cite[Corollaire~IV.26]{Br}. Note that \cite[Corollaire~IV.26]{Br} is
a consequence of the famous Riesz-Fr\'echet-Kolmogorov theorem \cite[Th\'eor\`eme~IX.25]{Br}. 
It is easy to see that the proof of both \cite[Th\'eor\`eme~IX.25]{Br} and 
\cite[Corollaire~IV.26]{Br}
remain virtually unchanged in the case of a weighted $L^p$ space such as $L_B^r(\Omega)$. 
Therefore, we can merely follow the proof of \cite[Th\'eor\`eme~IX.16]{Br} in the case $p<N$. 

Letting $\calF$ be the unit ball in $W_A(\Omega)$, this amounts to verifying assumptions 
(IV.23) and (IV.24) of \cite[Corollaire~IV.26]{Br}. Following Brezis,
for an open set $\omega\subset\Omega$ such that $\overline\omega$
is compact and $\overline\omega\subset\Omega$, we write $\omega\subset\subset\Omega$.
For a given $h\in\rn$, we also define the translate $\tau_h u$ of a function $u$ by 
$\tau_h u(x)=u(x+h), \ x\in\rn$. Assumptions (IV.23) and (IV.24) of \cite[Corollaire~IV.26]{Br} 
now read
\begin{align}\label{23}
&\forall\,\eps>0 \quad \forall\,\omega\subset\subset\Omega, 
\quad \exists\,\delta\in(0,\dist(\omega,\Omega^\mathrm{c})) \quad\text{s.t.} \notag \\
&\Ve \tau_h u - u \Ve_{L^q_B(\omega)}<\eps \quad \forall\,h\in\rn \ \text{s.t.} \ |h|<\de, 
\quad \forall\,u\in \calF
\end{align}
and
\begin{equation}\label{24}
\forall\,\eps>0 \quad \exists\,\omega\subset\subset\Omega \quad\text{s.t.}\quad
\Ve u \Ve_{L^q_B(\Omega\setminus\omega)}<\eps \quad \forall\,u\in \calF.
\end{equation}
To prove \eqref{23}, consider $\al\in(0,1]$ such that $\frac1q=\frac{\al}{1}+\frac{1-\al}{p^*}$. 
By interpolation,
\begin{equation}\label{interpolation}
\Ve \tau_h u - u \Ve_{L^q_B(\omega)} \le 
\Ve \tau_h u - u \Ve_{L^1_B(\omega)}^\al \Ve \tau_h u - u \Ve_{L^{p^*}_B(\omega)}^{1-\al}.
\end{equation}
Now following the proof of \cite[Proposition~IX.3]{Br}, it is easily seen that
\begin{equation}\label{L1est}
\Ve \tau_h u - u \Ve_{L^1_B(\omega)}\le \Ve B\Ve_\infty\Ve\nabla u\Ve_{L^1(\Omega)}|h|.
\end{equation}
Hence, recalling that $\Omega$ is bounded and using \eqref{xtrmsobolev}, we have
\begin{align*}
\Ve \tau_h u - u \Ve_{L^q_B(\omega)}
&\le \Ve B\Ve_\infty^\al \Ve\nabla u\Ve_{L^1(\Omega)}^\al |h|^\al 
(2 \Ve  u \Ve_{L^{p^*}_B(\Omega)})^{1-\al}\\
&\le 2^{1-\al} \Ve B\Ve_\infty \Ve\nabla u\Ve_{L^1(\Omega)}^\al 
\Ve  u \Ve_{L^{p^*}(\Omega)}^{1-\al}|h|^\al\\
&\le C \Ve\nabla u\Ve_{L^p(\Omega)}^\al \Ve \nabla u \Ve_{L^p(\Omega)}^{1-\al}|h|^\al\\
&\le C|h|^\al \quad (\text{since}  \  \Ve\nabla u\Ve_{L^p(\Omega)}\le 1 \ \text{for} \ u\in\calF),
\end{align*}
which proves \eqref{23}. On the other hand, for all $u\in\calF$, it follows from 
H\"older's inequality and \eqref{xtrmsobolev} that
\begin{align*}
\Ve u\Ve_{L^q_B(\Omega\setminus\omega)}^q
&=\int_{\Omega\setminus\omega}|u|^qB(x)\diff x
	\le \Ve B\Ve_\infty|\Omega\setminus\omega|^\frac{p^*-q}{p^*}
			\Ve u\Ve^q_{L_B^{p^*}(\Omega)}\\
&\le C |\Omega\setminus\omega|^\frac{p^*-q}{p^*}\Ve u\Ve^q_{L^{p^*}(\Omega)} 
\le C \Ve\nabla u\Ve_{L^p(\Omega)}^q |\Omega\setminus\omega|^\frac{p^*-q}{p^*} \\
&\le C |\Omega\setminus\omega|^\frac{p^*-q}{p^*}.
\end{align*}
Therefore, 
$\Ve u \Ve_{L^q_B(\Omega\setminus\omega)}\le 
C |\Omega\setminus\omega|^{\frac{1}{q}-\frac{1}{p^*}}$,
which proves \eqref{24} and concludes the proof that the embedding is compact when 
$\Omega$ is a smooth bounded domain.

We now consider the case $\Omega=\rn$. Let $(u_n)\subset W_A(\rn)$ be a bounded 
sequence. We will show that $(u_n)$ is relatively compact in $L_B^q(\rn)$. 
Firstly, since $W_A(\rn)$ is 
reflexive, we can suppose that $u_n\wto u$ weakly in $W_A(\rn)$, for some $u\in W_A(\rn)$. 
Also, denoting by $B(0,R)$ the ball 
of radius $R$ centred at $x=0$ in $\rn$, $u_n\ve_{B(0,R)}\wto u\ve_{B(0,R)}$ weakly in 
$W_A(B(0,R))$, and we already know that (up to a subsequence) $u_n\ve_{B(0,R)}\to u\ve_{B(0,R)}$ 
in $L_B^q(B(0,R))$. 
Furthermore, by H\"older's inequality and \eqref{xtrmsobolev},
\begin{align}\label{exterior}
\int_{|x|\ge R} |u_n-u|^q B(x) \diff x 
&\le C \int_{|x|\ge R} (|u_n|^q+|u|^q) B(x) \diff x \notag \\
&\le C \Big(\int_{|x|\ge R}(|u_n|^q+|u|^q)^{p^*/q}\diff x\Big)^{q/p^*} 
\Ve B\Ve_{L^\frac{p^*}{p^*-q}(|x|\ge R)} \notag\\
&\le C\big(\Ve u_n\Ve_{p^*}^q+\Ve u\Ve_{p^*}^q\big)\Ve B\Ve_{L^\frac{p^*}{p^*-q}(|x|\ge R)}\notag\\
&\le C \big(\Ve \nabla u_n\Ve_{p}^q+\Ve\nabla u\Ve_{p}^q\big)
\Ve B\Ve_{L^\frac{p^*}{p^*-q}(|x|\ge R)}\notag\\
&\le C\Ve B\Ve_{L^\frac{p^*}{p^*-q}(|x|\ge R)}.
\end{align}
Since $B\in L^\frac{p^*}{p^*-q}(\rn)$, the right-hand side of \eqref{exterior} can be made 
arbitrarily small by choosing $R>0$ large enough, uniformly in $n$, which concludes the proof.
\hfill$\Box$

\begin{remark}\label{localintegr.rem}
\rm
Let us now explain how the hypothesis $B\in L^\infty(\rn)$ in $(g1)$ can be relaxed to
a local integrability condition in the above proof, and hence throughout the whole paper.
Choosing $t\in(q,p^*)$ and $\al\in(0,1]$ such that $\frac1q=\frac{\al}{1}+\frac{1-\al}{t}$, 
we start by replacing the interpolation inequality \eqref{interpolation} by
\begin{equation}\label{interpolation'}
\Ve \tau_h u - u \Ve_{L^q_B(\omega)} \le 
\Ve \tau_h u - u \Ve_{L^1_B(\omega)}^\al \Ve \tau_h u - u \Ve_{L^t_B(\omega)}^{1-\al}.
\end{equation}
Then, instead of \eqref{L1est}, the proof of \cite[Proposition~IX.3]{Br} can be modified to show 
that
\begin{equation}\label{L1est'}
\Ve \tau_h u - u \Ve_{L^1_B(\omega)}\le \Ve B\Ve_{L^\frac{p}{p-1}(\Omega)}
\Ve\nabla u\Ve_{L^p(\Omega)}|h|.
\end{equation}
On the other hand,
\begin{align}\label{est'}
\Ve \tau_h u - u \Ve_{L^t_B(\omega)}
&\le 2 \Ve  u \Ve_{L^t_B(\Omega)}
	\le 2 \Ve B\Ve^{\frac1t}_{L^\frac{p^*}{p^*-t}(\Omega)}\Ve u\Ve_{L^{p^*}(\Omega)}\notag\\
&\le C \Ve B\Ve^{\frac1t}_{L^\frac{p^*}{p^*-t}(\Omega)} \Ve\nabla u\Ve_{L^p(\Omega)}.
\end{align}
That \eqref{23} holds now follows from \eqref{interpolation'}, \eqref{L1est'} and \eqref{est'}, 
provided\footnote{Note that $\frac{p}{p-1}>\frac{p^*}{p^*-t} \iff t<\frac{N}{N-p}$.}
\begin{equation}\label{localintegr}
B \in L^s_\mathrm{loc}(\rn), \quad\text{where}\quad s=\max\Big\{\frac{p}{p-1},
\frac{p^*}{p^*-t}\Big\}.
\end{equation}
Furthermore, we have
\begin{align*}
\Ve u\Ve_{L^q_B(\Omega\setminus\omega)}^q
&=\int_{\Omega\setminus\omega}|u|^qB(x)\diff x
	\le \Ve B\Ve_{L^\frac{p^*}{p^*-q}(\Omega\setminus\omega)}\Ve u\Ve^q_{L^{p^*}(\Omega)}\\
&\le C \Ve B\Ve_{L^\frac{p^*}{p^*-q}(\Omega\setminus\omega)}\Ve \nabla u\Ve^q_{L^p(\Omega)},
\end{align*}
and so \eqref{24} follows from the assumption that $B\in L^{\frac{p^*}{p^*-q}}(\R^N)$. 
This completes
the proof of compactness of the embedding $W_A(\Omega) \subset L_B^q(\Omega)$ when $\Omega$
is a bounded domain with smooth boundary. The case $\Omega=\rn$ then follows as before. 
Hence, we see that Proposition~\ref{sobolev.prop} holds provided 
$B\in L^{\frac{p^*}{p^*-q}}(\R^N)$ and $B$ satisfies \eqref{localintegr} for some $t\in(q,p^*)$.
\end{remark}


\section{Differentiability}\label{derivative.sec}

This appendix is devoted to the proof of Lemma~\ref{derivative.lem}. Before we proceed with the
proof, let us first remark that, thanks to Proposition~\ref{sobolev.prop},
\[
u\in W_A(\rn) \implies u \in L_A^p(\rn) \  \text{and}  \  u \in L_B^q(\rn),
\]
where $B$ and $q$ have been introduced in hypothesis $(g1)$.
As in Appendix~\ref{compact.sec}, we will again take advantage of the H\"older inequality in 
the weighted Lebesgue spaces $L_A^p(\rn)$ and $L_B^q(\rn)$.

\medskip
\noindent
{\em Proof of Lemma~\ref{derivative.lem}.}
For given $u,v \in W_A(\rn)$ we start by computing the G\^ateaux derivative $DS_\lam(u)$ of 
$S_\lam$ in the direction $v$, and we show that it is equal to the right side of 
\eqref{derivative}. That is, we
compute $\lim_{t\to0} \frac1t[S_\lam(u+tv)-S_\lam(u)]$. It follows from H\"older's inequality that
$|\nabla u|^{p-2}\nabla u \cdot \nabla v \in L^1(\rn)$. The derivation of the first term 
then follows in a standard manner, using the mean-value theorem and the dominated convergence
theorem. Let us now consider the other two terms in more details. For $t\neq0$, and $x\in\rn$, it 
follows from the mean-value theorem that there exists $s=s(t,x)\in[0,1]$ such that
\[
|u+tv|^p-|u|^p=p|u+stv|^{p-2}(u+stv)tv,
\]
and so
\[
\frac1t\frac1p A(x) (|u+tv|^p-|u|^p) \to A(x) |u|^{p-2}uv \quad \text{as} \ t\to0, \ 
\text{for a.e.} \ x\in\rn.
\]
Moreover,  
\[
\Big|\frac1t\frac1p A(x) (|u+tv|^p-|u|^p)\Big| \le A(x) |u+stv|^{p-1}|v| 
\le C A(x) (|u|^{p-1}|v|+|v|^p),
\]
where $A|v|^p \in L^1(\rn)$ since $v\in W_A$, and $A(x)|u|^{p-1}|v| \in L^1(\rn)$ by
H\"older's inequality in $L_A^p(\rn)$. Hence, by dominated convergence,
\[
\frac1t\frac1p\intrn A(x) (|u+tv|^p-|u|^p)\diff x \to \intrn A(x) |u|^{p-2}uv\diff x \quad \text{as} \ t\to0.
\]

To deal with the last term, we apply again the mean-value theorem, which yields a number 
$s=s(t,x)\in[0,1]$ such that
\[
\frac1t (G(x,u+tv)-G(x,u))=\frac1t g(x,u+stv)tv \to g(x,u)v  \quad \text{as} \ t\to0, \ 
\text{for a.e.} \ x\in\rn.
\]
Also, by $(g1)$,
\[
|g(x,u+stv)v| \le B(x)|u+stv|^{q-1}|v| \le C B(x) (|u|^{q-1}|v|+|v|^q) \in L^1(\rn),
\]
thanks to Proposition~\ref{sobolev.prop} with $r=q$. It then follows by dominated convergence that
\[
\frac1t\intrn (G(x,u+tv)-G(x,u)) \diff x \to \intrn g(x,u)v \diff x  \quad \text{as} \ t\to0.
\]
We have thus proved that the G\^ateaux derivative $DS_\lam(u)v$ exists and is equal to the
right-hand side of \eqref{derivative}

To complete the proof, we will now show that $DS_\lam(u)\in W_A^*$ for all $u\in W_A$, and that the
mapping $u\mapsto DS_\lam(u)$ is continuous. H\"older's inequality yields
\begin{align*}
\Big| \intrn |\nabla u|^{p-2}\nabla u \cdot \nabla v \diff x \Big| 
&\le \intrn |\nabla u|^{p-1}|\nabla v| \diff x \le \Ve |\nabla u|^{p-1} \Ve_{\frac{p}{p-1}} 
\Ve \nabla v \Ve_{p}\\
&= \Ve \nabla u \Ve_{p}^{p-1} \Ve \nabla v \Ve_{p} 
\le \Ve u\Ve_{W_A}^{p-1} \Ve v \Ve_{W_A},
\end{align*}
\begin{align*}
\Big|\intrn A(x) |u|^{p-2}uv \diff x\Big| 
& \le \intrn |u|^{p-1}|v| A(x) \diff x \le \Ve |u|^{p-1} \Ve_{L_A^\frac{p}{p-1}}\Ve v\Ve_{L_A^p}\\
&= \Ve u \Ve_{L_A^p}^{p-1} \Ve v\Ve_{L_A^p} \le \Ve u\Ve_{W_A}^{p-1} \Ve v \Ve_{W_A},
\end{align*}
and
\begin{align*}
\Big|\intrn g(x,u)v \diff x\Big| 
& \le \intrn |g(x,u)||v| \diff x \le \intrn |u|^{q-1}|v| B(x) \diff x \\
& \le \Ve |u|^{q-1} \Ve_{L_B^\frac{q}{q-1}}\Ve v\Ve_{L_B^q}\\ 
&=  \Ve u \Ve_{L_B^q}^{q-1} \Ve v\Ve_{L_B^q} \le C \Ve u\Ve_{W_A}^{q-1} \Ve v \Ve_{W_A},
\end{align*}
where the last inequality follows from Proposition~\ref{sobolev.prop}.
These estimates show that $DS_\lam(u)\in W_A^*$, for all $u\in W_A$.

To prove that $u\mapsto DS_\lam(u)$ is continuous, consider $(u_n)\subset W_A$ such that
$u_n\to u$ in $W_A$. We will show that
\begin{equation}\label{limito}
\Ve DS_\lam(u_n)-DS_\lam(u) \Ve 
= \sup_{v \in W_A\setminus\{0\}} \frac{|\la DS_\lam(u_n)-DS_\lam(u),v \ra|}{\Ve v\Ve_{W_A}} \to 0
\quad\text{as} \ n\to\infty. 
\end{equation}
We have
\begin{align}
|\la DS_\lam(u_n)-DS_\lam(u),v \ra| 
&\le \intrn ||\nabla u_n|^{p-2}\nabla u_n -|\nabla u|^{p-2}\nabla u| |\nabla v|\diff x\notag \\
&\phantom{\le} + |\lam| \intrn A(x)||u_n|^{p-2}u_n-|u|^{p-2}u||v|\diff x\notag\\
&\phantom{\le} + \intrn |g(x,u_n)-g(x,u)||v|\diff x.\notag
\end{align}
Using H\"older's inequality in the same fashion as above, we get
\begin{multline}\label{est1}
\intrn ||\nabla u_n|^{p-2}\nabla u_n -|\nabla u|^{p-2}\nabla u| |\nabla v|\diff x\\
\le \Ve |\nabla u_n|^{p-2}\nabla u_n -|\nabla u|^{p-2}\nabla u \Ve_{\frac{p}{p-1}} \Ve v\Ve_{W_A},
\end{multline}
\begin{equation}\label{est2}
\intrn A(x)||u_n|^{p-2}u_n-|u|^{p-2}u||v|\diff x
\le \Ve |u_n|^{p-2}u_n-|u|^{p-2}u \Ve_{L_A^\frac{p}{p-1}}\Ve v\Ve_{L_A^p}
\end{equation}
and
\begin{equation}\label{est3}
\intrn |g(x,u_n)-g(x,u)||v|\diff x
\le C \Ve |u_n|^{q-2}u_n-|u|^{q-2}u \Ve_{L_B^\frac{q}{q-1}} \Ve v \Ve_{W_A}.
\end{equation}
We now observe that, since $u_n\to u$ in $W_A$, we have (up to a subsequence) $u_n\to u$ and 
$\nabla u_n \to \nabla u$ pointwise a.e., $A|u_n|^p \le f$ and $B|u_n|^q\le g$ 
(by Proposition~\ref{sobolev.prop}) for some functions $f,g \in L^1(\rn)$, uniformly in $n$.
The limit in \eqref{limito} then follows from estimates \eqref{est1}-\eqref{est3} by dominated 
convergence, up to a subsequence. Since the previous argument can be applied to any subsequence of 
$(u_n)$, this completes the proof.
\hfill$\Box$

\end{document}